\tikzset{
  basic/.style  = {draw, text width=2cm, drop shadow, font=\sffamily, rectangle},
  root/.style   = {basic, rounded corners=2pt, thin, align=center,
                   fill=white!30, text width=12em},
  level 2/.style = {basic, rounded corners=6pt, thin,align=center, fill=white!60,
                   text width=12em},
  level 3/.style = {basic, rounded corners=6pt, thin,align=center, fill=white,
                   text width=9em}
}
\newtheorem{thm}{Theorem}[section]
\newtheorem{lem}[thm]{Lemma}
\newtheorem{prop}[thm]{Proposition}
\newtheorem{cor}[thm]{Corollary}
\newtheorem{rem}[thm]{Remark}
\newtheorem{exam}[thm]{Example}
\newtheorem*{thm AY}{Auslander-Yorke Dichotomy Theorem}
\newtheorem*{quest}{Question}
\newcommand{\diam}{{\mathrm{diam}}}
\newcommand{\dist}{{\mathrm{dist}}}
\newcommand{\Tran}{{\mathrm{Tran}}}
\newcommand{\orb}{{\mathrm{orb}}}
\newcommand{\inter}{{\mathrm{int}}}
\newcommand{\Eq}{{\mathrm{Eq}}}
\numberwithin{equation}{section}
\begin{document}

\title{analogues of Auslander--Yorke theorems for multi--sensitivity}

\subjclass[2010]{}
\keywords{}

\author{Wen Huang, Sergi{\u\i} Kolyada and Guohua Zhang}

\address{Department of Mathematics, Sichuan University,
Chengdu, Sichuan 610064, China}

\address{School of Mathematical Sciences, University of Science and Technology of
China, Hefei, Anhui 230026, China}

\email{wenh@mail.ustc.edu.cn}

\address{Institute of Mathematics, NASU, Tereshchenkivs'ka 3, 01601 Kyiv, Ukraine}

\email{skolyada@imath.kiev.ua}

\address{School of Mathematical Sciences and LMNS, Fudan University and Shanghai Center for Mathematical Sciences, Shanghai 200433, China}

\email{chiaths.zhang@gmail.com}

\begin{abstract}
We study multi-sensitivity and thick sensitivity for continuous surjective selfmaps on
compact metric spaces.
Our main result states that a minimal system is either multi-sensitive or an almost one-to-one extension of its maximal
equicontinuous factor. This is an analog of the Auslander-Yorke dichotomy theorem: a minimal system is either sensitive or equicontinuous.
Furthermore, we introduce the concept of a syndetically equicontinuous point, and prove that a transitive system is either thickly sensitive
or contains syndetically equicontinuous points, which is a refinement of another well known result of Akin, Auslander and Berg.
\end{abstract}

\subjclass[2010]{Primary 37B05; Secondary 54H20}

\keywords{Sensitive dynamical system, minimal map, almost automorphic map, multi-sensitivity, thick sensitivity}

\maketitle

\markboth{}{}


\section{Introduction}

Throughout this paper $(X,T)$ denotes a  \textit{topological dynamical system}, where
$X$ is a compact metric space with metric $\varrho$ and $T:X \to X$ is a continuous surjection.

\medskip

The notion of sensitivity (sensitive dependence on initial conditions) was first used by Ruelle \cite{Ru}.
According to  the works by Guckenheimer \cite{Gu}, Auslander and Yorke \cite{AuslanderYorke} a dynamical
system $(X,T)$ is called
{\it sensitive} if there exists $\delta> 0$ such that for every $x\in X$ and every
neighborhood $U_x$ of $x$, there exist $y\in U_x$ and $n\in \mathbb{N}$ with $ \varrho(T^n(x),T^n(y))> \delta$, where $\mathbb{N}$ is the set of all natural numbers (positive integers).
According to \cite{AK}, it is easy to see that $(X,T)$ is sensitive if and only if $S_T (U, \delta)$
is infinite for some $\delta> 0$ and every opene\footnote{Because we so often have to refer to
open, nonempty subsets, we will call such subsets \emph{opene}.} set $U\subset X$, where
$$S_T (U, \delta)= \{n\in \mathbb{N}: \text { there are } x_1, x_2\in U\ \text{such that}\ \varrho (T^n x_1, T^n x_2)>
\delta\}.$$
We define $J_T (U, \delta)\subset \mathbb{N}$ to be the complement of $S_T (U, \delta)$.

The Lyapunov stability or, in other words, equicontinuity is the opposite to the notion of sensitivity.
Recall that a point $x \in X$ is called \emph{Lyapunov stable}  if for every $\varepsilon > 0$ there exists a $\delta > 0$ such that $\varrho(x,x') < \delta$
implies $\varrho (T^n x, T^n x')< \varepsilon$ for any $n\in \mathbb{N}$, equivalently, for every $\varepsilon> 0$ there exists a neighborhood $U$ of $x$ such that $J_T (U, \varepsilon)= \mathbb{N}$.
This condition says exactly that the sequence of iterates $\{T^{n} : n \geq 0 \}$ is
equicontinuous at $x$.
Hence, such a point is also called an \emph{equicontinuity point} of $(X, T)$.
Denote by $\text{Eq} (X, T)$ the set of all equicontinuity points of $(X, T)$.
The system $(X, T)$ is called \emph{equicontinuous} if $\text{Eq} (X, T)= X$.

The well-known Auslander-Yorke dichotomy theorem states that a
minimal dynamical system is either sensitive or equicontinuous
 \cite{AuslanderYorke} (see also \cite{GlasnerWeiss1993}), which was further refined in \cite{Akin1997, AAB1993}:
 a transitive system is either sensitive or almost equicontinuous (in the sense of containing some equicontinuity points). We recommend \cite{LiYe} for a survey on the recent development of chaos theory, including sensitivity and equicontinuity, in topological dynamics.

\medskip

Recall that a subset $\mathcal{S}\subset \mathbb{N}$ is called \emph{thick} if for each $k\in \mathbb{N}$ there
 exists $n_k\in \mathbb{N}$ such that $\{n_k, n_k+ 1, \dots, n_k+ k\}\subset \mathcal{S}$,
 and is
 \emph{syndetic} if there exists $m\in \mathbb{N}$ such that
 $\mathcal{S}\cap \{n, n+ 1, \dots,
  n+ m\}\not = \varnothing$ for each $n\in \mathbb{N}$. A thick set has a nonempty intersection with every syndetic set. The following definitions of stronger
 forms of sensitivity were introduced in \cite{liuheng, Subrahmonian2007}.
A dynamical system $(X,T)$ is said to be
\begin{itemize}
\item[(1)] \emph{thickly sensitive} if there exists $\delta> 0$ such that $S_T (U, \delta)$
is thick for any opene $U\subset X$;

\item[(2)] \emph{multi-sensitive}
if there exists $\delta> 0$ such that $\bigcap\limits_{i= 1}^k S_T (U_i, \delta)\neq \varnothing$  for any finite collection $U_1, \dots, U_k$ of opene subsets of $X$.
\end{itemize}

In the paper we show that an analog of the Auslander-Yorke dichotomy theorem can also be found for this stronger forms of sensitivity.
Precisely, by using Veech's characterization of
 equicontinuous structure relation of a system \cite[Theorem 1.1]{Veech1968}, we prove that
a minimal system is either thickly sensitive or an almost one-to-one extension of
its maximal equicontinous factor (Theorem \ref{1407120539}),
and thick sensitivity is equivalent to multi-sensitivity
for transitive systems (Proposition \ref{1407052203}).
In particular,
 an invertible minimal system is either multi-sensitive or almost automorphic (Corollary \ref{1407161841}).

 Recall that the concept of almost automorphy, as a generalization of almost periodicity, was first introduced
 by Bochner in 1955 (in the context of differential geometry \cite{Bochner}) and
 studied by many authors starting from \cite{Bo}, \cite{Veech1965}, \cite{Veech1977}.


We extend the notion of equicontinuity by demanding that the set $J_T (U, \varepsilon)$ is large. Precisely, we introduce the concept of syndetically equicontinuous points, which means that for every $\varepsilon> 0$ there exists a neighborhood $U$ of $x$ such that $J_T (U, \varepsilon)$ is a syndetic set.
It turns out that this new notion of local equicontinuity is very useful. That is, the refined Auslander-Yorke
dichotomy theorem \cite{Akin1997, AAB1993} also holds in our setting (Theorem \ref{1407181746}):
a transitive system is either thickly sensitive or contains syndetically equicontinuous points.

 \medskip
\noindent {\bf Acknowledgements.}  The authors acknowledge the hospitality
of the Max-Planck-Institute f\"ur Mathematik (MPIM) in Bonn, where a
substantial part of this paper was written during the Activity ``Dynamics and Numbers'', June -- July 2014.
We thank MPIM for providing an ideal setting. The first and second authors also
acknowledge the hospitality of the School of Mathematical Sciences of
the Fudan University, Shanghai.  We also thank Xiangdong Ye for sharing his joint
work \cite{LiYe, YeYu}, Joseph Auslander for his short proof of Lemma \ref{added}, T. K. S. Moothathu
for his valuable comments, and the referees for important comments that have resulted in substantial improvements to this paper.

The first author was supported by NNSF of China (11225105, 11431012), the third author
was supported by NNSF of China (11271078).

\section{Preliminaries}

In this section we recall standard concepts and results used in later discussions.

\subsection{Topological dynamics}
Recall that $(X, T)$ is (\textit{topologically}) \emph{transitive} if $N_T (U_1, U_2)= \{n\in \mathbb{N}: U_1\cap T^{-n}U_2\neq \varnothing \}$
 is nonempty  for any opene $U_1, U_2\subset X$. A point $x\in X$ is called \emph{transitive} if its \emph{orbit} $\orb_T (x)=
   \{T^n x: n= 0, 1, 2, ... \}$ is dense in $X$. Denote by $\Tran (X, T)$ the set of all transitive
    points of $(X, T)$. Since $T$ is surjective,  $(X, T)$ is transitive if and only if $\Tran (X, T)\neq \varnothing $.

    The system $(X, T)$ is called \emph{minimal} if $\Tran(X,T) = X$.
    In general, a subset $A$ of $X$ is \emph{invariant} if $TA = A$.
    If $A$ is a closed, nonempty, invariant subset then $(A,T|_A)$ is called the associated \emph{subsystem}.
    A \emph{minimal subset} of $X$ is a nonempty, closed, invariant subset such that the associated subsystem is minimal.
    Clearly, $(X,T)$ is minimal if and only if it admits no a proper, nonempty, closed, invariant subset.
     A point $x \in X$ is called \emph{minimal} if it lies in some minimal subset.
     Zorn's Lemma implies that every closed, nonempty invariant set contains a minimal set.
Observe that by the classic result of Gottschalk,
 $x\in X$ is minimal if and only if $N_T (x, U)= \{n\in \mathbb{N}: T^n x\in U\}$ is syndetic for any neighborhood $U$ of $x$.

A transitive system $(X, T)$ is called an \emph{E-system} \cite{GlasnerWeiss1993} if it admits an invariant probability Borel measure $\mu$ with
 full support, that is $T \mu= \mu$ and $\mu (U)> 0$ for all opene $U\subset X$.
   Note that a nonminimal E-system is sensitive
\cite[Theorem 1.3]{GlasnerWeiss1993}.

 \subsection{Extensions and factor maps}
  Let $(X, T)$ and $(Y, S)$ be topological dynamical systems. By a \emph{factor map} $\pi: (X, T)\rightarrow (Y, S)$ we mean that $\pi: X\rightarrow Y$ is a continuous
 surjection with $\pi\circ T= S\circ \pi$. In this case, we call $(X, T)$  an \emph{extension} of
  $(Y, S)$ and $(Y, S)$ a \emph{factor} of $(X, T)$, we also call $\pi: (X, T)\rightarrow (Y, S)$
    an \emph{extension}.

Each dynamical system admits a maximal equicontinuous factor. In fact, this factor is related to the regionally
proximal relation of the system.
The \emph{regionally proximal relation} $Q_+ (X, T)$ of $(X, T)$ is defined as: $(x, y)\in Q_+ (X, T)$ if and only if
for any $\varepsilon> 0$ there exist $x', y'\in X$ and $n\in \mathbb{N}$ with $\max \{\varrho (x, x'), \varrho (y, y'),
 \varrho (T^n x', T^n y')\}< \varepsilon$. Observe that $Q_+ (X, T)\subset X\times X$ is closed and positively invariant (in the sense
  that if $(x, y)\in Q_+ (X, T)$ then $(T x, T y)\in Q_+ (X, T)$), and the quotient by the smallest closed, positively invariant equivalence relation containing it is the maximal equicontinuous factor
  $(X_{\text{eq}}, T_{\text{eq}})$ of $(X, T)$. If $(X, T)$ is minimal, then $Q_+ (X, T)$ is in fact an equivalence relation by \cite{Auslander1988, BHM2000, EG1960, Veech1968}
   and \cite[Proposition A.4]{HuangYe2000}.
Denote by $\pi_{\text{eq}}: (X, T)\rightarrow (X_{\text{eq}}, T_{\text{eq}})$ the corresponding factor map. Remark that
 $(X_{\text{eq}}, T_{\text{eq}})$ is invertible, when $(X, T)$ is transitive, because each transitive equicontinuous
 system is uniformly rigid \cite[Lemma 1.2]{GlasnerWeiss1993} and this implies invertibility.

    Let $X$ be a compact metric space and
let $\phi: X\rightarrow Y$ be a continuous surjective map.
Denote by $Y_0\subset Y$ the set of all points $y\in Y$ whose fibers are singletons. The
set $Y_0$ is a $G_\delta$ subset of $Y$,
 because
$$Y_0= \{y\in Y: \phi^{- 1} (y)\ \text{is a singleton}\}= \bigcap_{n\in \mathbb{N}} \left\{y\in Y: \diam (\phi^{- 1} (y))<
\frac{1}{n}\right\}$$
and the map $y\mapsto \diam (\phi^{- 1} (y))$ is upper semi-continuous.
 Recall that
the function $f: Y\rightarrow \mathbb{R}_+$ is
\emph{upper semi-continuous} if $\limsup\limits_{y\rightarrow y_0} f (y)\le f (y_0)$ for each $y_0\in Y$.

If $Y_0\subset Y$ is a dense subset, then we call $\phi$ \emph{almost one-to-one}.
If $\pi: (X, T)\rightarrow (Y, S)$ is an almost one-to-one factor map between topological dynamical systems,
then we also call $(X, T)$ an \emph{almost one-to-one extension of $(Y, S)$}. Recall that if a
dynamical system $(X,T)$ is minimal, where $X$ is a compact metric space, then the map $T:X\to X$ is almost one-to-one
\cite[Theorem 2.7]{KST2001}.

\begin{rem} \label{supplement}
Here we take the definition of \emph{almost one-to-one} from \cite{Downarowicz2005, KST2001}, as we shall use this one in
Proposition \ref{201603271142} and in the construction of Example \ref{1407142346}. Note that the denseness of $\pi^{- 1} (Y_0)$ in $X$,
used heavily in \cite{AkinGlasner2001}, is a sufficient but not necessary condition; and these two conditions are equivalent,
when  $\pi$ is a factor map between minimal systems (see Proposition \ref{201603271632}).
\end{rem}

A pair of points $x, y \in X$  is called \emph{proximal} if $\liminf\limits_{n\rightarrow \infty} \varrho (T^n x, T^n y)= 0$.
In this case each of points from the pair is said to be \emph{proximal} to another.
Let $\pi: (X, T)\rightarrow (Y, S)$ be a factor map between dynamical systems. We call $\pi$ \emph{proximal}
if any pair of points $x_1, x_2\in X$ is proximal whenever $\pi (x_1)= \pi (x_2)$.
Note that any almost one-to-one extension between minimal systems is a proximal extension.

     Recall that the \emph{natural extension} $(\widehat{X}, \widehat{T})$ of $(X, T)$ is defined as
$$\widehat{X}= \{(x_1, x_2, \dots): T (x_{i+ 1})= x_i\ \text{and}\ x_i\in X\ \text{for each}\ i\in \mathbb{N}\},$$
$$\widehat{T}: (x_1, x_2, \dots)\mapsto (T x_1, x_1, \dots),$$
with a compatible metric $d$ given by
$$d ((x_1, x_2, \dots), (x_1^*, x_2^*, \dots))= \sum_{n\in \mathbb{N}} \frac{\varrho (x_n, x_n^*)}{2^n M}\ \text{with}\ M= \diam (X)+ 1.$$
Then $(\widehat{X}, \widehat{T})$ is an invertible extension of $(X, T)$ with a factor map $\widehat{\pi}: (\widehat{X},
 \widehat{T})\mapsto (X, T)$, given by $(x_1, x_2, \dots)\mapsto x_1$.
 Observe that
 $$\left(\prod_{i=1}^n U_i\times \prod_{n+ 1}^\infty X\right)\cap \widehat{X}= \left(\prod_1^{n- 1} X\times \bigcap_{i= 1}^n T^{- (n- i)} U_i\times \prod_{n+ 1}^\infty X\right)\cap \widehat{X}$$
  for any opene $U_1, \dots, U_n\subset X$,
 and all such subsets form a basis for the topology of $\widehat{X}$. Applying this fact, it is not hard to check from the definitions that $(X, T)$ is
  minimal (sensitive, thickly sensitive, multi-sensitive, respectively)
  if and only if $(\widehat{X}, \widehat{T})$ is minimal (sensitive, thickly sensitive, multi-sensitive, respectively).

\subsection{Other concepts}
    A continuous map $\phi: X\rightarrow Y$ is called
\emph{almost open} if $\phi (U)$ has a nonempty interior in $Y$ for any opene $U\subset X$.
Recall that if a system $(X,T)$ is minimal then the map $T:X\to X$ is almost open \cite{KST2001}. It is easy to see that all of sensitivity, thick sensitivity and
  multi-sensitivity can be lifted from a factor to an extension by an almost open factor map by the method used in the
   proof of \cite[Lemma 1.6]{GlasnerWeiss1993}. Note that any factor map from a system containing a dense set of
   minimal points
    to a minimal system is almost open, as each factor map between minimal systems is also almost open \cite[Theorem 1.15]{Auslander1988}.

\begin{lem} \label{201603271750}
Let $\pi: (X, T)\rightarrow (Y, S)$ be a factor map between minimal systems. If $Y_1\subset Y$ is a dense subset, then $\pi^{- 1} (Y_1)\subset X$ is also a dense subset.
\end{lem}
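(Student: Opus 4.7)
The plan is to reduce the claim to the almost openness of $\pi$, which is noted in the paragraph preceding the lemma: factor maps between minimal systems are almost open by \cite[Theorem 1.15]{Auslander1988}. Density of $\pi^{-1}(Y_1)$ in $X$ is equivalent to the statement that $\pi^{-1}(Y_1)$ meets every opene subset of $X$, so it suffices to fix an arbitrary opene $U \subset X$ and produce a point of $U$ whose image under $\pi$ lies in $Y_1$.

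Here is how I would carry this out. First, since $\pi$ is almost open, the set $V = \mathrm{int}(\pi(U))$ is an opene subset of $Y$. Second, since $Y_1$ is dense in $Y$, we have $V \cap Y_1 \neq \varnothing$; pick any $y$ in this intersection. Third, because $y \in V \subset \pi(U)$, there exists $x \in U$ with $\pi(x) = y \in Y_1$, i.e., $x \in U \cap \pi^{-1}(Y_1)$. Since $U$ was arbitrary opene, $\pi^{-1}(Y_1)$ is dense in $X$.

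There is essentially no obstacle once the almost openness of $\pi$ is invoked; the whole content of the lemma is repackaging that fact. The only point that requires minor care is the distinction emphasized in Remark \ref{supplement}: the hypothesis ``$Y_1$ dense in $Y$'' generally does \emph{not} force $\pi^{-1}(Y_1)$ to be dense in $X$ for an arbitrary factor map, and it is exactly the minimality of both systems (used to invoke almost openness) that bridges the gap. This is also the reason the lemma is stated here and then used in Proposition \ref{201603271632} to establish the equivalence of the two versions of ``almost one-to-one'' in the minimal setting.
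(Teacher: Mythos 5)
Your proof is correct and is essentially the paper's argument: both rest on the almost openness of $\pi$ from \cite[Theorem 1.15]{Auslander1988} combined with the density of $Y_1$; the paper merely phrases it contrapositively (taking $U$ to be the complement of the closure of $\pi^{-1}(Y_1)$ and deriving a contradiction) while you argue directly with an arbitrary opene $U$.
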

\begin{proof}
If the conclusion does not hold, then $U\subset X$ is an opene subset, where $U$ is the complement of the closure of $\pi^{- 1} (Y_1)$ in $X$. Thus $\pi (U)$ has a nonempty interior in $Y$ by \cite[Theorem 1.15]{Auslander1988}, and hence $\pi (U)\cap Y_1\neq \emptyset$ by the denseness of $Y_1$ in $X$, which implies $U\cap \pi^{- 1} (Y_1)\neq \emptyset$, a contradiction with the construction of $U$.
\end{proof}

Now we can prove the following

\begin{prop} \label{201603271632}
Let $\pi: (X, T)\rightarrow (Y, S)$ be a factor map between minimal systems. Denote by $Y_0$ the set of all points $y\in Y$ whose fibers $\pi^{- 1} (y)$ are singletons. Then $Y_0$ is a dense subset of $Y$ if and only if $\pi^{- 1} (Y_0)$ is a dense subset of $X$.
\end{prop}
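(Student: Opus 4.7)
The plan is to treat the two implications separately, as one direction is essentially a restatement of the lemma just proved while the other requires nothing more than continuity and surjectivity.

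For the forward implication, I would simply appeal to Lemma \ref{201603271750}. Under the assumption that $Y_0 \subset Y$ is dense, setting $Y_1 = Y_0$ in that lemma yields immediately that $\pi^{-1}(Y_0) \subset X$ is dense, because $\pi: (X,T) \to (Y,S)$ is a factor map between minimal systems. So this direction is free.

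For the reverse implication, suppose $\pi^{-1}(Y_0)$ is dense in $X$, and let $V \subset Y$ be any opene subset. I would argue that $\pi^{-1}(V)$ is opene in $X$: it is open by continuity of $\pi$, and nonempty because $\pi$ is surjective (any $y \in V$ has at least one preimage). By density of $\pi^{-1}(Y_0)$, there must exist some $x \in \pi^{-1}(V) \cap \pi^{-1}(Y_0) = \pi^{-1}(V \cap Y_0)$. Then $\pi(x) \in V \cap Y_0$, showing that $V \cap Y_0 \neq \varnothing$. Since $V$ was arbitrary, $Y_0$ is dense in $Y$.

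There is no real obstacle in this proof, and in fact the reverse direction never uses minimality of the two systems or any structural hypothesis on $\pi$ beyond continuity and surjectivity. The whole weight of the proposition sits in the forward direction, which in turn rests on the fact (used in Lemma \ref{201603271750}) that factor maps between minimal systems are almost open, by \cite[Theorem 1.15]{Auslander1988}. Thus the proposition is really a clean packaging of that almost-openness, specialised to the $G_\delta$ set $Y_0$ of singleton fibres.
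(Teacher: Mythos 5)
Your argument is correct, but the forward implication takes a genuinely different (and shorter) route than the paper's. You observe that Lemma \ref{201603271750} applies verbatim with $Y_1=Y_0$, so the nontrivial direction is an immediate corollary of the lemma; the reverse direction is, as you note, pure point-set topology using only continuity and surjectivity of $\pi$, and your verification of it is fine. The paper does not apply the lemma to $Y_0$ directly. Instead it lets $Y_1$ be the set of points of $Y$ with singleton $S^{-1}$-fiber (a dense $G_\delta$ by \cite[Theorem 2.7]{KST2001} since $(Y,S)$ is minimal), forms $Y_*=Y_0\cap\bigcap_{n\ge 0}S^{-n}(Y_1)$ (dense $G_\delta$ by Baire, using that $Y_0$ is a dense $G_\delta$), proves that $Y_*$ is positively invariant --- the singleton-$S^{-1}$-fiber condition along the forward orbit is what allows one to push $y\in Y_0$ forward to $Sy\in Y_0$ --- and then concludes that $\pi^{-1}(Y_*)\subset\pi^{-1}(Y_0)$ is dense because a nonempty positively invariant subset of the minimal system $(X,T)$ is dense; Lemma \ref{201603271750} enters there only to show each $S^{-n}(Y_1)$ is dense. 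Your route isolates the real content, namely the almost openness of factor maps between minimal systems \cite[Theorem 1.15]{Auslander1988} already packaged in the lemma, and it does not even need $Y_0$ to be $G_\delta$; the paper's longer argument yields as a by-product a dense positively invariant subset of $Y_0$, but nothing in the rest of the paper requires that extra information, so your simplification loses nothing.
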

\begin{proof}
It suffices to show the denseness of $\pi^{- 1} (Y_0)$ in $X$ when $Y_0$ is dense in $Y$. Denote by $Y_1$ the set of all points $y\in Y$ such that $S^{- 1} (y)$ is a singleton.
Then $Y_1\subset Y$ is a dense $G_\delta$ subset by \cite[Theorem 2.7]{KST2001}; and hence so is $S^{- n} (Y_1)\subset Y$ for each $n\in \mathbb{Z}_+$, whose denseness in $Y$ follows from Lemma \ref{201603271750}.
Now set
\begin{equation*} \label{201603271650}
Y_*= Y_0\cap Y_\infty\ \text{with}\ Y_\infty= \bigcap_{n\ge 0} S^{- n} (Y_1).
\end{equation*}
We have that $Y_*\subset Y$ is a dense $G_\delta$ subset by the assumption of $Y_0$.

Let $y\in Y_*$, and assume $\pi^{- 1} (y)= \{x^*\}$ by the definition of $Y_0$. It is clear that $S y\in Y_\infty\subset Y_1$, in particular, $S^{- 1} (S y)= \{y\}$. Now if $x\in X$ satisfies $\pi (x)= S y$, and take $x_0\in T^{- 1} (x)$, then $S (\pi x_0)= \pi (T x_0)= S y$, which implies $\pi (x_0)= y$ and hence $x_0= x^*$, thus $x= T x_0= T x^*$. That is, $\pi^{- 1} (S y)= \{T x^*\}$, which implies $S y\in Y_0$ and hence $S y\in Y_*$. This show that $Y_*$ is a positively invariant subset of $Y$, and hence $\pi^{- 1} (Y_*)$ is a positively invariant subset of $X$. Finally applying the minimality of the system $(X, T)$ we obtain the denseness of $\pi^{- 1} (Y_*)$ in $X$, and then the denseness of $\pi^{- 1} (Y_0)$ in $X$. This finishes the proof.
\end{proof}

We  also use the following concepts by Furstenberg \cite{Furstenberg1981}.
Let $\mathcal{S}\subset \mathbb{N}$. The set $\mathcal{S}$ is called a \emph{central set} if there exists a topological dynamical
system $(X, T)$ with $x\in X$ and open $U\subset X$ containing a minimal point $y$ of $(X, T)$ such that the pair
$(x, y)$
is proximal and $N_T (x, U)\subset \mathcal{S}$. The set $\mathcal{S}$ is called a \emph{difference set} or shortly \emph{$\Delta$-set} if there exists
$\{s_1< s_2< \dots\}\subset \mathbb{N}$
with $\mathcal{S}= \{s_i- s_j: i> j\}$. The set $\mathcal{S}$ is a \emph{$\Delta^*$-set} if it
has a nonempty intersection with any $\Delta$-set.
Note that each central set contains
 a $\Delta$-set \cite[Proposition 8.10 and Lemma 9.1]{Furstenberg1981}; and if $(X, T)$ is a minimal system, then $N_T (U, U)$ is
 a $\Delta^*$-set for any opene $U\subset X$ by \cite[Page 177]{Furstenberg1981}.

 \section{Dichotomy of multi-sensitivity for transitive systems}

 The Auslander-Yorke dichotomy theorem states that a minimal system is either sensitive or equicontinuous
  (see \cite{Akin1997, AAB1993, AuslanderYorke, GlasnerWeiss1993}). The goal of this section is to provide
  an analog of the Auslander-Yorke theorem for multi-sensitivity (see Theorem \ref{1407120539}, Proposition \ref{1407052203} and Theorem \ref{1407181746}),
  which is the main result of this paper.

  \medskip

Our dichotomy is stated firstly for minimal thickly sensitive systems as follows.
Remark that recently Ye and Yu introduced and discussed block sensitivity and strong sensitivity for several families,
and obtained results similar to Theorem \ref{1407120539} for these sensitivities \cite{YeYu}.
We defer the long proof of it to Section 4.

\begin{thm} \label{1407120539}
Let $(X, T)$ be a minimal system. Then $(X, T)$ is not thickly sensitive if and only if $(X, T)$ is an almost one-to-one extension of $(X_{\text{eq}}, T_{\text{eq}})$.
\end{thm}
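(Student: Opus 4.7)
I handle the two directions separately: the ``if'' direction is a direct construction using a singleton fibre, while the ``only if'' direction is a contrapositive argument built around Veech's characterisation of the regionally proximal relation in minimal systems (\cite[Theorem 1.1]{Veech1968}).

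\emph{The easy direction.} Suppose $\pi_{\text{eq}}:(X,T)\to(X_{\text{eq}},T_{\text{eq}})$ is almost one-to-one and fix $x_0\in X$ with $\pi_{\text{eq}}^{-1}(y_0)=\{x_0\}$, where $y_0:=\pi_{\text{eq}}(x_0)$. Given $\delta>0$, the closed-map property of $\pi_{\text{eq}}$ (applied to the compact set $X\setminus B(x_0,\delta/3)$) gives an open $V\ni y_0$ in $X_{\text{eq}}$ with $\pi_{\text{eq}}^{-1}(V)\subseteq B(x_0,\delta/3)$. Because $(X_{\text{eq}},T_{\text{eq}})$ is equicontinuous, I pick smaller opens $V'\ni y_0$ and $V_0\ni y_0$ (contained in $V$) so that $T_{\text{eq}}^n V'\subseteq V$ whenever $T_{\text{eq}}^n y_0\in V_0$; minimality of $(X_{\text{eq}},T_{\text{eq}})$ makes $N:=\{n:T_{\text{eq}}^n y_0\in V_0\}$ syndetic. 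With $U:=\pi_{\text{eq}}^{-1}(V')$ opene in $X$, for every $n\in N$ one has $T^nU\subseteq\pi_{\text{eq}}^{-1}(V)\subseteq B(x_0,\delta/3)$, so $N\subseteq J_T(U,\delta)$; hence $S_T(U,\delta)$ is not thick. Since $\delta$ was arbitrary, $(X,T)$ is not thickly sensitive.

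\emph{The hard direction, first half.} Assume, for contradiction, that $(X,T)$ is \emph{not} an almost one-to-one extension, and set $Y_0:=\{y\in X_{\text{eq}}:|\pi_{\text{eq}}^{-1}(y)|=1\}$, $D(y):=\diam(\pi_{\text{eq}}^{-1}(y))$. A direct check using surjectivity of $T$ and invertibility of $T_{\text{eq}}$ shows $Y_0$ is forward $T_{\text{eq}}$-invariant, so by minimality of $(X_{\text{eq}},T_{\text{eq}})$ it is either dense or empty; the hypothesis forces $Y_0=\varnothing$. Upper semi-continuity of $D$ makes each $\{D<1/n\}$ open, and Baire category produces $\delta_0>0$ and an opene $V_0\subseteq X_{\text{eq}}$ with $D(y)\ge\delta_0$ on $V_0$. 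By uniform syndeticity of $V_0$-return times in $(X_{\text{eq}},T_{\text{eq}})$ (say every interval of length $L$ contains one), each $y$ admits $n(y)\in\{0,\dots,L-1\}$ with $T_{\text{eq}}^{n(y)}y\in V_0$; using surjectivity of $T^{n(y)}:\pi_{\text{eq}}^{-1}(y)\to\pi_{\text{eq}}^{-1}(T_{\text{eq}}^{n(y)}y)$ to lift a $\delta_0$-separated pair and then the uniform continuity of the finite family $T^0,\dots,T^{L-1}$, I obtain a universal $\eta_*>0$ with $D(y)\ge\eta_*$ for every $y\in X_{\text{eq}}$.

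\emph{The hard direction, second half, and main obstacle.} Given any opene $U\subseteq X$, pick $x_1\in U$; by the uniform bound there is $x_1^*\in\pi_{\text{eq}}^{-1}(\pi_{\text{eq}}(x_1))$ with $\varrho(x_1,x_1^*)\ge\eta_*/2$, so $(x_1,x_1^*)\in Q_+(X,T)$. Invoking Veech's characterisation of $Q_+$ for minimal systems, together with the rigid thick/syndetic return-time structure supplied by $(X_{\text{eq}},T_{\text{eq}})$, the plan is to locate inside every sufficiently long interval of $\mathbb N$ a window $[N,N+K]\subseteq S_T(U,\eta_*/3)$; letting $K\to\infty$ forces $S_T(U,\eta_*/3)$ to be thick, contradicting the hypothesis. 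The principal obstacle is precisely this last step: converting the existence of a \emph{single} regionally proximal pair $(x_1,x_1^*)$ of macroscopic separation into \emph{arbitrarily long windows} of times at which some pair of points in the \emph{same} opene $U$ is spread apart by $T^n$. The combinatorial bookkeeping needed to patch together witnesses that a priori depend on $n$ into a single interval, carried through the factor $\pi_{\text{eq}}$ via Veech's theorem, is the technical heart of the argument and explains why the authors defer the proof to Section~4.
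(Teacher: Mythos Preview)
Your easy direction is correct and is essentially the paper's Lemma~4.1/Proposition~4.2. Your first half of the hard direction (the uniform lower bound $\eta_*$ on fibre diameters) is also correct; it is a variant of the first paragraph of the paper's Proposition~4.4.

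The gap is in your second half, and it is a real one --- not merely bookkeeping. Your pair $(x_1,x_1^*)$ lies in the same $\pi_{\text{eq}}$-fibre and is $\eta_*/2$-separated \emph{at time~$0$}, but nothing prevents it from being \emph{proximal}: the separation $\varrho(T^nx_1,T^nx_1^*)$ may collapse, so this single pair cannot witness sensitivity at arbitrary later times. Your appeal to the ``thick/syndetic return-time structure'' of $(X_{\text{eq}},T_{\text{eq}})$ does not help, because the equicontinuous factor only tells you which fibre $T^nx$ lands in, never where inside that fibre; the within-fibre dynamics is exactly what is uncontrolled. Veech's characterisation alone does not bridge this.

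The paper closes the gap by a dichotomy you do not make: either $\pi_{\text{eq}}$ is proximal or it is not. If it is \emph{not} proximal (Proposition~4.7), one chooses a \emph{distal} pair $(x_1,x_2)$ in a common fibre, so $d:=\inf_n\varrho(T^nx_1,T^nx_2)>0$; now Veech's theorem, reformulated as Proposition~4.6, yields a $\Delta$-set inside $N_T(x_1,W)$ for any neighbourhood $W$ of $x_2$, which one intersects with the $\Delta^*$-set $N_T(V,V)$ to produce the long windows. If $\pi_{\text{eq}}$ \emph{is} proximal but not almost one-to-one (Proposition~4.4), one uses your uniform bound $\eta_*$, but the mechanism is different: for each $i\le m$ one picks a far-away point $u_i$ in the fibre over $\operatorname{int}(\pi_{\text{eq}}(T^iV))$, passes to a nearby minimal point $(v_0,\dots,v_m)$ in the product, and exploits proximality of the product extension (Lemma~4.3) to obtain a \emph{central} set of hitting times, which again contains a $\Delta$-set to intersect with $N_T(V,V)$. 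In both branches the engine is Furstenberg's $\Delta$-set/$\Delta^*$-set/central-set machinery, not the thick/syndetic dichotomy you invoke; and the proximal/non-proximal split is precisely what lets one replace your possibly-collapsing pair with one whose separation persists (distal case) or is regenerated at every scale (proximal case).
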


As shown by the following result, for transitive systems thick sensitivity is equivalent to multi-sensitivity.
Observe that Moothathu pointed out firstly in \cite{Subrahmonian2007} that multi-sensitivity implies thick sensitivity.

\begin{prop} \label{1407052203}
If $(X, T)$ is multi-sensitive, then $(X, T)$ is thickly sensitive. Moreover, if $(X, T)$ is transitive, then the converse also holds.
\end{prop}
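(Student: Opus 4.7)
The plan is to establish both directions with the same sensitivity constant $\delta$ (the one witnessing the hypothesis).

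For the forward implication (multi-sensitive implies thickly sensitive), fix opene $U$ and $k \in \N$; the target is a block of $k+1$ consecutive integers inside $S_T(U, \delta)$. The natural family to feed into multi-sensitivity is $\{T^{-j}U\}_{j=0}^{k}$ (all opene by surjectivity of $T$). For any $n \in \bigcap_{j=0}^{k} S_T(T^{-j}U, \delta)$ with $n \geq k$, the block $\{n-k, n-k+1, \ldots, n\}$ lands inside $S_T(U, \delta)$ at once: for each $j$ the witnesses $x, y \in T^{-j}U$ satisfy $T^j x, T^j y \in U$ and $T^{n-j}(T^j x) = T^n x$, so $n-j \in S_T(U, \delta)$. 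The only difficulty is that multi-sensitivity merely promises nonemptiness, with no a priori control over the size of $n$. To force $n \geq k$, use uniform continuity of $T, T^2, \ldots, T^k$ on the compact space $X$ to choose $\eta > 0$ with $\varrho(x, y) < \eta$ implying $\varrho(T^i x, T^i y) < \delta$ for every $0 \leq i \leq k$. Shrink each $T^{-j}U$ to an opene $V'_j \subseteq T^{-j}U$ of diameter below $\eta$, and apply multi-sensitivity to $\{V'_j\}_{j=0}^{k}$: any $n$ in the resulting intersection witnesses $\delta$-separation of an $\eta$-close pair, which is impossible at steps $i \leq k$, so $n \geq k+1$ and the block construction goes through.

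For the converse, suppose $(X, T)$ is transitive and thickly sensitive with constant $\delta$, and fix opene $U_1, \ldots, U_k$. The plan is to collapse the family to a single opene set. Iterating transitivity produces integers $n_1 = 0, n_2, \ldots, n_k \geq 0$ and a descending chain of opene sets $V_1 \supseteq V_2 \supseteq \cdots \supseteq V_k$ with $V_1 \subseteq U_1$ and $T^{n_i}(V_i) \subseteq U_i$ for each $i$: at step $i$, transitivity supplies $n_i$ with $V_{i-1} \cap T^{-n_i}U_i \neq \varnothing$, and we shrink to an opene subset of this intersection. Set $V = V_k$; then $V \subseteq U_1$ and $T^{n_i}(V) \subseteq U_i$ for every $i$. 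By thick sensitivity applied to $V$, pick $n$ with $\{n, n+1, \ldots, n + \max_i n_i\} \subseteq S_T(V, \delta)$. Each $n + n_i$ lies in this block; pushing the corresponding witnesses through $T^{n_i}$ lands them inside $U_i$ and shows $n \in S_T(U_i, \delta)$, so $n \in \bigcap_i S_T(U_i, \delta)$.

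The chief obstacle is the first implication: multi-sensitivity gives no lower bound on the integer it produces, yet the identity $T^{n-j} \circ T^j = T^n$ requires $n \geq j$. The uniform-continuity shrinking device is the key, forcing separation times to exceed $k$. The converse is comparatively routine once iterated transitivity aligns the target sets along a single orbit and thick sensitivity supplies a consecutive block long enough to cover all the shifts $n_i$.
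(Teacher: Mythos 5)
Your proof is correct and follows essentially the same route as the paper's: in the forward direction the paper likewise shrinks each $T^{-j}U$ to an opene set whose first $k$ images have diameter below $\delta$ (your uniform-continuity choice of $\eta$ is just the mechanism behind that shrinking), forcing the separation time past $k$; and in the converse the paper also aligns $U_1,\dots,U_k$ along a single orbit via transitivity and extracts the times $n+n_i$ from a sufficiently long block in $S_T(V,\delta)$. The only cosmetic differences are your use of $\max_i n_i$ in place of the paper's $n_1+\dots+n_k$ and iterated open-set transitivity in place of a transitive point.
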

\begin{proof}
First assume that $(X, T)$ is multi-sensitive with a sensitivity constant $\delta> 0$, and take any opene $U\subset X$. Let $k\in \mathbb{N}$. For each $i= 0, 1, \cdots, k$, we choose opene $U_i\subset T^{- i} U$ such that $\max\limits_{0\le j\le k} \text{diam} (T^j U_i)< \delta$. By the assumption of $\delta$ we may select $n_k\in \bigcap_{i= 0}^k S_T (U_i, \delta)$. Moreover, from the construction of $U_0, U_1, \dots, U_k$ one has that $n_k\in \bigcap_{i= 0}^k S_T (T^{- i} U, \delta)$ and $n_k> k$. Obviously, $\{n_k- k, \dots,
 n_k- 1, n_k\}\subset S_T (U, \delta)$, which implies that $(X, T)$ is thickly sensitive with a sensitivity constant $\delta$.

Now we assume that a transitive system $(X, T)$ is thickly sensitive with a sensitivity constant $\delta> 0$. Let $k\in \mathbb{N}$ and  $U_1, \dots, U_k$ be opene sets in $X$. Take a transitive point
$x\in \Tran (X, T)$. Then  there exists $n_i\in \mathbb{N}$ such that  $T^{n_i} x\in U_i$, where $i= 1, \dots, k$. So, we may get
 an opene $U\subset X$ such that $T^{n_i} U\subset U_i$ for every $i= 1, \dots, k$. By assumption there exists $s\in
  \mathbb{N}$ with $\{s, s+ 1, \dots, s+ n_1+ \dots + n_k\}\subset S_T (U, \delta)$, and then one has
  $s\in \bigcap\limits_{i= 1}^k S_T (U_i, \delta)$. This shows that $(X, T)$ is multi-sensitive with a sensitivity constant $\delta$.
\end{proof}

Let $(X, T)$ be an invertible system. Recall that $x\in X$ is an \emph{almost automorphic point} of $(X, T)$ if $T^{n_k} x\rightarrow x'$
implies $T^{- n_k} x'\rightarrow x$ for any $\{n_k: k\in \mathbb{N}\}\subset \mathbb{Z}$, where $\mathbb{Z}$ is the set of all integers. The system $(X, T)$ is said to be
\emph{almost automorphic} if $X= \overline{\orb_T (x)}$ for an almost automorphic point $x\in X$.
The structure of minimal almost automorphic systems was
 characterized in \cite{Veech1965}: a minimal invertible system is almost automorphic if and only if it is an almost one-to-one extension
  of its maximal equicontinuous factor $(X_{\text{eq}}, T_{\text{eq}})$.

   Thus, directly from Theorem \ref{1407120539}, we have the following

  \begin{cor} \label{1407161841}
Let $(X, T)$ be an invertible minimal system. Then $(X, T)$ is not multi-sensitive if and only if it is almost automorphic.
  \end{cor}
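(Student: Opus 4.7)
The plan is to chain together the three ingredients the paper has just assembled. First, since $(X,T)$ is minimal it is a fortiori transitive, so Proposition \ref{1407052203} applies and gives the equivalence: $(X,T)$ is multi-sensitive if and only if $(X,T)$ is thickly sensitive. Taking contrapositives, $(X,T)$ is not multi-sensitive iff it is not thickly sensitive.

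Second, since $(X,T)$ is a minimal system, Theorem \ref{1407120539} asserts that not being thickly sensitive is equivalent to $(X,T)$ being an almost one-to-one extension of its maximal equicontinuous factor $(X_{\text{eq}}, T_{\text{eq}})$. Combining with the previous step, $(X,T)$ fails to be multi-sensitive iff it is an almost one-to-one extension of $(X_{\text{eq}}, T_{\text{eq}})$.

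Third, I invoke Veech's structure theorem for almost automorphic systems, recalled in the paragraph preceding the corollary: an invertible minimal system is almost automorphic if and only if it is an almost one-to-one extension of $(X_{\text{eq}}, T_{\text{eq}})$. Since $(X,T)$ is assumed invertible minimal, Veech's characterization applies directly, converting the conclusion of the previous step into the desired statement. There is no real obstacle here; the proof is purely a concatenation of equivalences. The only thing worth flagging is that the hypotheses line up correctly: invertibility is used only to apply Veech's theorem, whereas Proposition \ref{1407052203} and Theorem \ref{1407120539} need only transitivity and minimality respectively, both of which $(X,T)$ satisfies.
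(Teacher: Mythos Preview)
Your argument is correct and is exactly the approach the paper takes: the corollary is stated immediately after Veech's characterization with the remark that it follows ``directly from Theorem \ref{1407120539},'' implicitly combining that theorem with Proposition \ref{1407052203} and Veech's theorem just as you do.
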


We are going to link thick sensitivity with local equicontinuity of points by introducing the concept of syndetically equicontinuous points of a system.

We say that $x\in X$ is \emph{syndetically equicontinuous} if for any $\varepsilon> 0$ there exists a neighborhood $U$
 of $x$ such that $J_T (U, \varepsilon)$ is a syndetic set. Denote by $\text{Eq}_{\text{syn}} (X, T)$ the set of all syndetically equicontinuous points of $(X, T)$. Then  $\text{Eq}_{\text{syn}} (X, T)\supset \text{Eq} (X, T)$.
  Since a thick set has a nonempty intersection with every syndetic set, one has readily that
 if $(X, T)$ is
 thickly sensitive then $\text{Eq}_{\text{syn}} (X, T)= \varnothing$, equivalently, if $\text{Eq}_{\text{syn}} (X, T)\neq \varnothing $ then $(X, T)$ is
 not thickly sensitive.



Recall the \emph{Auslander-Yorke Dichotomy Theorem} from \cite{AuslanderYorke} as follows, supplemented by some results from \cite{GlasnerWeiss1993}
and \cite{AAB1993}.

\begin{thm AY}  Let $(X,T)$ be a transitive system. Then exactly one of the following two cases holds.

{\bf $\Eq(X,T) \not= \varnothing$:} \emph{Assume that there exists an equicontinuity point for the system}.
The equicontinuity points are exactly the transitive points, i.e., $\Eq(X,T) = \Tran(X,T)$, and the system is almost equicontinuous.
The map $T$ is a homeomorphism and the inverse system $(X,T^{-1})$ is also almost equicontinuous.  Furthermore,
the system is \emph{uniformly rigid} meaning that some subsequence of $\{ T^{n} : n= 0, 1, \dots\}$ converges uniformly to the identity map on $X$.

{\bf $\Eq(X,T) = \varnothing$:} \emph{Assume that the system has no equicontinuity points.}
The system is sensitive.
\end{thm AY}

Similarly, we have the following dichotomy. We call the system $(X, T)$ \emph{syndetically equicontinuous} if $\text{Eq}_{\text{syn}} (X, T)= X$.

\begin{thm} \label{1407181746}
Let $(X, T)$ be a transitive system.
Then either $(X, T)$ is thickly sensitive and so $\text{Eq}_{\text{syn}} (X, T)= \emptyset$, or $\Tran (X, T)\subset \text{Eq}_{\text{syn}} (X, T)$.
In particular, if $(X, T)$ is minimal then it is either thickly sensitive or syndetically equicontinuous.
\end{thm}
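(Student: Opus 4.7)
The plan is to argue by contrapositive: assume $(X,T)$ is not thickly sensitive and deduce $\Tran(X,T)\subset \text{Eq}_{\text{syn}}(X,T)$. The essential first step is a combinatorial duality: a subset $A\subset \mathbb{N}$ fails to be thick if and only if $\mathbb{N}\setminus A$ is syndetic (if no block of length $k+1$ lies in $A$, then every such block meets $\mathbb{N}\setminus A$, and conversely). Applying this with $A= S_T(U,\varepsilon)$, the hypothesis ``not thickly sensitive'' translates exactly to: for every $\varepsilon>0$ there is an opene $U\subset X$ with $J_T(U,\varepsilon)$ syndetic.

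Next I would fix $x_0\in \Tran(X,T)$ and $\varepsilon>0$, and choose such a $U$. Using the density of the forward orbit of $x_0$, pick $n\in\mathbb{N}$ with $T^n x_0\in U$; by continuity of $T^n$, choose a neighborhood $V$ of $x_0$ with $T^n V\subset U$. The key inclusion to check is
$$J_T(V,\varepsilon)\supset n+ J_T(U,\varepsilon),$$
which follows by composition: if $m\in J_T(U,\varepsilon)$ and $y_1,y_2\in V$, then $T^n y_1, T^n y_2\in U$, so $\varrho(T^{m+n}y_1,T^{m+n}y_2)\le \varepsilon$, and hence $m+n\in J_T(V,\varepsilon)$. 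A forward translate of a syndetic set is again syndetic (the consecutive gaps are preserved, while the leading interval of length $n$ merely inflates the syndeticity constant by $n$), so $J_T(V,\varepsilon)$ is syndetic and therefore $x_0\in \text{Eq}_{\text{syn}}(X,T)$.

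For the ``in particular'' statement, minimality gives $\Tran(X,T)=X$, so when $(X,T)$ is minimal and not thickly sensitive the previous step yields $\text{Eq}_{\text{syn}}(X,T)=X$, i.e.\ $(X,T)$ is syndetically equicontinuous. The two alternatives are disjoint by the remark already made in the paper that thick sensitivity forces $\text{Eq}_{\text{syn}}(X,T)=\varnothing$. There is no serious obstacle here: once the not-thick/syndetic-complement duality is noticed, the rest is the standard Akin--Auslander--Berg pullback trick adapted to this setting, and the only mildly subtle point is verifying that the shift $n+J_T(U,\varepsilon)$ is still syndetic in $\mathbb{N}$.
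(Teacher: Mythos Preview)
Your proof is correct and follows essentially the same approach as the paper's own proof: both negate thick sensitivity, invoke the thick/syndetic complement duality to obtain an opene set $U$ with $J_T(U,\varepsilon)$ syndetic, and then pull back along a forward iterate of a transitive point to a neighborhood $V$ with $J_T(V,\varepsilon)\supset n+J_T(U,\varepsilon)$. The only cosmetic difference is that the paper phrases the last step in terms of pairs $x,x'\in U$ rather than the inclusion of index sets, but the content is identical.
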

\begin{proof}
It suffices to show that, if $(X, T)$ is not thickly sensitive then
 $\Tran (X, T)\subset \text{Eq}_{\text{syn}} (X, T)$.
Let $\delta> 0$. By the assumption there exists opene $U'\subset X$ such that $S_T (U', \delta)$ is not
 thick. Equivalently, there exists syndetic $\mathcal{N}\subset \mathbb{N}$ such that $\varrho (T^n x_1, T^n x_2)\le \delta$
 whenever $x_1, x_2\in U'$ and $n\in \mathcal{N}$. Now for any $x\in \Tran (X, T)$ there exists $m\in \mathbb{N}$ with
  $T^m x\in U'$ and hence there exists open $U\subset X$ containing $x$ with $T^m U\subset U'$. In particular, $\varrho (T^{m+ n} x,
  T^{m+ n} x')\le \delta$ whenever $x'\in U$ and $n\in \mathcal{N}$. That implies $x\in \text{Eq}_{\text{syn}} (X, T)$, as
   $m+ \mathcal{N}$ is a syndetic set.
\end{proof}

Note that the dichotomy Theorem \ref{1407181746} does not work when the system $(X, T)$ is not transitive. The following example is due to an anonymous referee of the paper.

\begin{exam} \label{201603261744}
There exists a system $(X, T)$, which is not thickly sensitive, such that $\text{Eq}_{\text{syn}} (X, T)= \emptyset$.
\end{exam}
\begin{proof}[Construction]
Let $(Y_1, S_1)$ be the full shift homeomorphism on two symbols, that is, $Y_1= \{0, 1\}^\mathbb{Z}$ and $S_1: (y_i: i\in \mathbb{Z})\mapsto (y_{i+ 1}: i\in \mathbb{Z})$. We take a fixed point $e_1\in Y_1$. Let $(Y_2, S_2)$ be the identity map on the one-point compactification of the discrete space $\mathbb{N}$ with $e_2\in Y_2$ the point at infinity. Now we set $(X, T)$ to be the factor of the product system $(Y_1\times Y_2, S_1\times S_2)$ by collapsing $(Y_1\times \{e_2\})\cup (\{e_1\}\times Y_2)$ into a fixed point $e\in X$. Then $(X, T)$ is the required system:

 By the construction of $X$, for each $\delta> 0$, there exists $p\in \mathbb{N}$ such that $Y_1\times \{p\}\subset X$ is an open invariant subset with $\diam (Y_1\times \{p\})< \delta$, in particular, the set $S_T (Y_1\times \{p\}, \delta)= \emptyset$ is not thick. This implies that $(X,T)$ is not thickly sensitive.

 Let $x\in X$. By the construction of $X$, there exists $q\in \mathbb{N}$ such that any neighborhood of $x$ has a nonempty intersection with $Y_1\times \{q\}$, and set $\delta_x= \frac{1}{2} \diam (Y_1\times \{q\})> 0$. It is easy to check that, for each neighborhood $U$ of $x$ there exists $N_U\in \mathbb{N}$ such that $\diam (T^n U)> \delta_x$ for all $n> N_U$, in particular, the set $J_T (U, \delta_x)\subset \{1, \cdots, N_U\}$ is not syndetic. This shows $x\notin \text{Eq}_{\text{syn}} (X, T)$, and then $\text{Eq}_{\text{syn}} (X, T)= \emptyset$.
\end{proof}

The following examples show  that for a transitive not thickly sensitive system $(X, T)$
the nonempty set $\text{Eq}_{\text{syn}} (X, T)$ may be very complicated. Note that by \cite[Theorem 1.3]{GlasnerWeiss1993} each non sensitive E-system is necessarily a minimal equicontinuous system.


\begin{exam} \label{1407142346}
There exists a sensitive, not thickly sensitive system $(X, T)$ such that $\Tran (X, T)\subsetneq \text{Eq}_{\text{syn}} (X, T)= X$. In fact, the constructed system $(X, T)$ is a nonminimal E-system.
\end{exam}
\begin{proof}[Construction]
We take a Toeplitz flow $(Y, S)$ which is a minimal invertible system with positive topological entropy. See
\cite{Downarowicz2005}
 for the construction of such a system.
Let $\pi_{\text{eq}}: (Y, S)\rightarrow (Y_{\text{eq}}, S_{\text{eq}})$
be the factor map of $(Y, S)$ over its maximal equicontinuous factor. Then $\pi_{\text{eq}}$ is an almost one-to-one extension between minimal systems. And hence $(Y, S)$ is not thickly sensitive by Theorem \ref{1407120539}, and $\pi_{\text{eq}}$ is a proximal extension.

By the classical variational principle (see for example \cite[Theorem 8.6]{Walter}) we choose an ergodic invariant Borel probability measure $\nu$ of $(Y, S)$ with positive measure-theoretic $\nu$-entropy $h_\nu (Y, S)$.
 Let $\nu= \int_Z \nu_z d \eta (z)$ be the disintegration of $\nu$ over the Pinsker factor $(Z, \mathcal{D}, \eta, R)$ of
 $(Y, \mathcal{B}_\nu, \nu, S)$, where $(Y, \mathcal{B}_\nu, \nu)$ is the completion of $(Y, \mathcal{B}_Y, \nu)$ and
 $\mathcal{B}_Y$ denotes the Borel $\sigma$-algebra of $Y$  (for the construction of such a disintegration see for example \cite[Chapter 5, \S 4]{Furstenberg1981}). Set $\lambda= \int_Z \nu_z\times \nu_z d \eta (z)$, which is in fact
 an ergodic invariant Borel probability measure of $(Y\times Y, S\times S)$ with positive measure-theoretic $\lambda$-entropy and $\lambda (X\setminus \Delta_Y)> 0$
 by \cite{Glasner1997}, where $\Delta_Y= \{(y, y): y\in Y\}$ and $X\subset Y\times Y$ is the support of $\lambda$, that is, $X$ is the smallest
  closed subset of $Y\times Y$ with $\lambda (X)= 1$. It is easy to see that $(X, S\times S)$ forms a transitive system having a nonempty
   intersection with $\Delta_Y$, denoted by $(X, T)$. Then $X\supsetneq \Delta_Y$ by the minimality of $(Y, S)$. Additionally, if $(y_1, y_2)\in X$ then $\pi_{\text{eq}} (y_1)= \pi_{\text{eq}} (y_2)$, and hence $y_1$ and $y_2$ are proximal (as $\pi_{\text{eq}}$ is a proximal extension). In particular, if $(y_1, y_2)\in X$ is a minimal point of $(X, T)$ then $y_1= y_2$, and so $\Delta_Y$ is the unique minimal subsystem contained in $(X, T)$.

   Now we check that the system $(X, T)$ satisfies all required properties.
Clearly, $(X, T)$ is a nonminimal E-system. Moreover, it is an almost one-to-one extension of a
minimal equicontinuous system $(Y_{\text{eq}}, S_{\text{eq}})$ (because $\pi_{\text{eq}}$ is almost one-to-one and $\pi_{\text{eq}} (y_1)= \pi_{\text{eq}} (y_2)$ for all $(y_1, y_2)\in X$), and hence not thickly sensitive by Proposition \ref{201603271142}.
In fact, we can also obtain it directly from the definitions.

Now let us check $\text{Eq}_{\text{syn}} (X, T)= X$.
Let $x\in X$ and $y_0\in Y$, and hence
 $y_0\in \text{Eq}_{\text{syn}} (Y, S)$ by Theorem \ref{1407181746}. Take a compatible metric $\varrho_1$ over $Y$, and a compatible metric $\varrho ((y_1, y_2), (y_1', y_2'))=
\max \{\varrho_1 (y_1, y_1'), \varrho_1 (y_2, y_2')\}$ over $X$ for $y_1, y_1', y_2, y_2'\in Y$.
Then for each $\delta> 0$ there exists open $U_Y\subset Y$ containing $y_0$ and
 syndetic $\mathcal{N}\subset \mathbb{N}$ such that $\varrho_1 (S^n y, S^n y_0)\le \delta$ whenever $y\in U_Y$ and $n\in \mathcal{N}$.
  Thus $\varrho_1 (S^n y_1, S^n y_2)\le 2 \delta$ whenever $y_1, y_2\in U_Y$ and $n\in \mathcal{N}$, and finally
  $\varrho (T^n x_1, T^n x_2)\le 2 \delta$ whenever $x_1, x_2\in (U_Y\times U_Y)\cap X$ and $n\in \mathcal{N}$.
As $\Delta_Y$ is the unique
 minimal subsystem of $(X, T)$, there exist $m\in \mathbb{N}$ and open $U\subset X$ containing $x$ with $T^m U\subset (U_Y\times U_Y)\cap X$,
and so $\varrho (T^{m+ n} x, T^{m+ n} x')\le 2 \delta$ whenever $x'\in U$ and $n\in \mathcal{N}$. This implies
   $x\in \text{Eq}_{\text{syn}} (X, T)$, because $m+ \mathcal{N}\subset \mathbb{N}$ is syndetic. The construction is done.
\end{proof}

\begin{exam} \label{1407142300}
There exists a  nonminimal E-system $(X', T')$ which
is not thickly sensitive, such that
$\Tran (X', T')\subsetneq
\text{Eq}_{\text{syn}} (X', T')\subsetneq X'$.
\end{exam}
\begin{proof}[Construction]
Let $(X, T)$ and $(Y, S)$ be the systems as constructed in Example \ref{1407142346}, and we take $(X', T')$ to be the
system constructed by collapsing $\Delta_Y$ into a fixed point $p_0$ of $(X, T)$. Then the fixed point $p_0$ is the unique minimal point of $(X', T')$. Let $\pi: (X, T)\rightarrow (X', T')$ be the
 corresponding factor map.

    Now we check that the system $(X', T')$ satisfies all required properties.
It is easy to see that $(X', T')$ is an invertible nonminimal E-system.
 Then $X'\setminus \Tran (X', T')$ is a dense subset of $X'$ (see \cite{KS1997}), and hence
  $X'\setminus \Tran (X', T')\supsetneq \{p_0\}$.
  Moreover, the factor map $\pi$ is almost open.
 This implies that $(X', T')$ is not thickly sensitive, because $(X, T)$ is not thickly sensitive.
In fact, $\pi: X\setminus \Delta_Y\rightarrow X'\setminus \{p_0\}$ is a
   homeomorphism. Therefore we obtain that $\Tran (X', T')\subsetneq X'\setminus \{p_0\}\subset \text{Eq}_{\text{syn}} (X', T')$, as
   $\text{Eq}_{\text{syn}} (X, T)= X$.

Finally we are going to show that $p_0\notin \text{Eq}_{\text{syn}} (X', T')$ and hence $\Tran (X', T')\subsetneq X'\setminus \{p_0\}=
\text{Eq}_{\text{syn}} (X', T')\subsetneq X'$. Let $x_*\in X'\setminus \{p_0\}$ and set
 $0< \delta< \dist (\{x_*\}, \{p_0\})$. Let $U_0\subset X$ be any open set containing $p_0$ and $m\in \mathbb{N}$. By shrinking $U_0$ we may choose open $U_*$ containing $x_*$ such that $\text{dist} (U_*, U_0)> \delta$. We take opene $W\subset U_0$ such that $(T')^{- j} W\subset U_0$ for all $j= 0, 1, \dots, m$ (as $T' p_0= p_0$), and then
\begin{eqnarray*}
N_{(T')^{- 1}} (U_*, U_0)&\supset & \bigcup_{j= 0}^m N_{(T')^{- 1}} (U_*, (T')^{- j} W)\ \\
&\supset & \{n+ j: n\in N_{(T')^{- 1}} (U_*, W), j= 0, 1, \dots, m\}.
\end{eqnarray*}
Thus $N_{T'} (U_0, U_*)= N_{(T')^{- 1}} (U_*, U_0)$ is a thick set, because $N_{(T')^{- 1}} (U_*, W)\neq \varnothing$ by the transitivity of $(X', (T')^{- 1})$ (as $(X', T')$ is an invertible transitive system, $(X', (T')^{- 1})$
 is also transitive from the definition).
And hence $S_{T'} (U_0, \delta)$ is a thick set, as $T' p_0= p_0$. In particular,
  $p_0\notin \text{Eq}_{\text{syn}} (X', T')$.
The construction is done.
\end{proof}

Theorem 3.4 from \cite{KST2001} says that any irrational rotation of the two dimensional torus has an almost
one-to-one extension which is a noninvertible minimal map of that torus. By Proposition \ref{201603271142} any of these above mentioned noninvertible minimal maps on the torus is
syndetically equicontinuous, and by the Auslander-Yorke Dichotomy Theorem
any noninvertible minimal map is sensitive. But we still have the following open question.

\begin{quest}
If a syndetically equicontinuous system is uniformly rigid then is it equicontinuous?
\end{quest}

\section{Proof of Theorem \ref{1407120539}}

In this section we present a proof of our dichotomy Theorem \ref{1407120539}.

\begin{lem} \label{1407070139}
Let $\pi: (X, T)\rightarrow (Y, S)$ be a factor map and let $y_0\in \text{Eq} (Y, S)$ be a minimal point such that $\pi^{- 1} (y_0)= \{x_0\}$. Then $x_0\in \text{Eq}_{\text{syn}} (X, T)$. In particular, $(X, T)$ is not thickly sensitive.
\end{lem}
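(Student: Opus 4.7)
The plan is to produce, for every $\varepsilon > 0$, an open neighborhood $U$ of $x_0$ together with a syndetic set $\mathcal{N} \subset \mathbb{N}$ such that $\diam(T^n U) \le \varepsilon$ for every $n \in \mathcal{N}$; this gives $\mathcal{N} \subset J_T(U, \varepsilon)$ and hence $x_0 \in \text{Eq}_{\text{syn}}(X,T)$. The ``in particular'' conclusion is then immediate from the observation recorded just after the definition of syndetic equicontinuity: a thickly sensitive system has $\text{Eq}_{\text{syn}}(X,T) = \varnothing$.

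The three ingredients I would weave together are: (i) equicontinuity of $S$ at $y_0$, which propagates smallness along the orbit uniformly for all $n \ge 0$; (ii) minimality of $y_0$, which produces a syndetic set of return times to any neighborhood of $y_0$; and (iii) the singleton fiber condition $\pi^{-1}(y_0) = \{x_0\}$, which via compactness of $X$ and continuity of $\pi$ allows a pull-back: for any open $W \ni x_0$, since $X\setminus W$ is compact and $\pi(X\setminus W)$ misses $y_0$, the open set $V := Y \setminus \pi(X \setminus W)$ contains $y_0$ and satisfies $\pi^{-1}(V) \subset W$.

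Concretely, given $\varepsilon > 0$, first pick open $W \ni x_0$ with $\diam(W) < \varepsilon$ and obtain $V \ni y_0$ with $\pi^{-1}(V) \subset W$ as above. Choose open $V_1 \ni y_0$ with $\overline{V_1} \subset V$, set $\eta := \dist(\overline{V_1}, Y \setminus V) > 0$, and use equicontinuity of $S$ at $y_0$ to find open $V_2 \ni y_0$ with $\varrho_Y(S^n y_0, S^n y) < \eta$ for all $y \in V_2$ and all $n \ge 0$. Minimality of $y_0$ makes $\mathcal{N} := N_S(y_0, V_1)$ syndetic. For $n \in \mathcal{N}$ and $y \in V_2$, the point $S^n y_0$ lies in $V_1$ and $S^n y$ is within $\eta$ of it, so $S^n y \in V$. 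Setting $U := \pi^{-1}(V_2)$ (open and containing $x_0$), we obtain $\pi(T^n U) \subset V$ for every $n \in \mathcal{N}$, hence $T^n U \subset \pi^{-1}(V) \subset W$ and $\diam(T^n U) < \varepsilon$.

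The delicate step is choreographing the nested sizes $W \supset \pi^{-1}(V) \supset \pi^{-1}(\overline{V_1})$ and $V_2 \subset V_1 \subset V$ so that the syndetic return times and the pointwise equicontinuity bound combine correctly. The essential subtlety is that equicontinuity at $y_0$ only controls how far $S^n y$ strays from $S^n y_0$, not where the orbit of $y_0$ itself sits; it is minimality, through the syndetic hitting times of the smaller set $V_1$, that anchors $S^n y_0$ inside a region leaving room for an $\eta$-tube to remain inside $V$. Once this bookkeeping is done, pulling back through $\pi$ is routine.
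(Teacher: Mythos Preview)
Your proof is correct and follows essentially the same approach as the paper's: both arguments pull back a small neighborhood of $x_0$ through the singleton fiber, use equicontinuity at $y_0$ to keep the orbit of a small $Y$-neighborhood close to the orbit of $y_0$, and invoke minimality of $y_0$ to get a syndetic set of return times anchoring $S^n y_0$ near $y_0$. The only differences are bookkeeping: the paper works with concentric metric balls (a $\kappa$-ball inside a $2\varepsilon$-ball inside $V$) and uses a single neighborhood $V'$ for both the equicontinuity estimate and the return-time set, whereas you separate these roles into $V_2$ and $V_1$ and use a $\dist(\overline{V_1}, Y\setminus V)$ buffer instead of the triangle inequality with $2\varepsilon$.
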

\begin{proof}
Let $\delta> 0$ and let $\varrho_Y$ be a compatible
 metric over $Y$. We take open $W\subset X$ containing $x_0$ such that
  $\diam (W)< \delta$. As $\{x_0\}= \pi^{- 1} (y_0)$, there exists open $V\subset Y$ containing $y_0$ such that $\pi^{- 1} (V)\subset W$.
   Let $\varepsilon> 0$ be small enough such that $\{y\in Y: \varrho_Y (y_0, y)< 2 \varepsilon\}\subset V$. Since $y_0\in \text{Eq} (Y, S)$,
    there exists $\varepsilon\ge \kappa> 0$ such that $\varrho_Y (S^n y, S^n y_0)< \varepsilon$ whenever $\varrho_Y (y, y_0)< \kappa$
    and $n\in \mathbb{N}$.
Take $V'= \{y\in Y: \varrho_Y (y_0, y)< \kappa\}, U= \pi^{- 1} (V')\ni x_0$ and set $\mathcal{S}= N_S (y_0, V')$.
Note that $\mathcal{S}$ is syndetic, because $y_0$ is a minimal point.
Let
$n\in \mathcal{S}$. Then $S^n y_0\in V'$, additionally, if $y\in V'$ then $\varrho_Y (S^n y_0, S^n y)< \varepsilon$ and so $\varrho_Y (y_0, S^n y)< 2 \varepsilon$,
 that gives $S^n V'\subset V$, and hence
$$T^n U= T^n \pi^{- 1} (V')\subset \pi^{- 1} (S^n V')\subset \pi^{- 1} (V)\subset W.$$

Summing up, for each $\delta> 0$ there exist open $U\subset X$ containing $x_0$ and a syndetic set $\mathcal{S}\subset \mathbb{N}$ such that $\varrho (T^n x_0, T^n x)< \delta$ for all $x\in U$ and any $n\in \mathcal{S}$ (recall that $\varrho$ is the metric over $X$).
That is, $x_0\in \text{Eq}_{\text{syn}} (X, T)$.
\end{proof}

Similarly, we can provide proof
of the following result, which is in fact Lemma 6.3 from \cite{HKKZ2016}.


\begin{prop} \label{201603271142}
If $(X, T)$ is an almost one-to-one extension of a minimal equicontinuous system, then
it is syndetically equicontinuous.
\end{prop}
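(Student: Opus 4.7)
The proof adapts the strategy of Lemma \ref{1407070139}, but the point $x$ at which we wish to verify syndetic equicontinuity is no longer required to have a singleton fibre; accordingly, some auxiliary point $y_0\in Y_0$ (which exists because $\pi$ is almost one-to-one) will serve as a ``good'' point, and the pointwise control at $y_0$ will be transported to $\pi(x)$ using the \emph{uniform} equicontinuity of $(Y,S)$ (automatic on a compact metric space).

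Fix $x\in X$ and $\delta>0$; the goal is to produce an open neighborhood $U$ of $x$ and a syndetic set $\mathcal{N}\subset\mathbb{N}$ with $\diam(T^n U)<\delta$ for every $n\in\mathcal{N}$. First I would pick any $y_0\in Y_0$ and, invoking a standard compactness argument for the singleton fibre $\pi^{-1}(y_0)$, produce an open $V\ni y_0$ with $\diam(\pi^{-1}(V))<\delta$. Then I choose $\varepsilon>0$ small enough that $\{y\in Y:\varrho_Y(y,y_0)<\varepsilon\}\subset V$, and apply the uniform equicontinuity of $(Y,S)$ to obtain $\kappa>0$ such that $\varrho_Y(y,y')<\kappa$ forces $\varrho_Y(S^n y,S^n y')<\varepsilon/2$ for all $n\in\mathbb{N}$.

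Next, set $V'=\{y\in Y:\varrho_Y(y,y_0)<\varepsilon/2\}$, take $U=\pi^{-1}(\{y\in Y:\varrho_Y(y,\pi(x))<\kappa\})$, and define $\mathcal{N}=N_S(\pi(x),V')$. Since $(Y,S)$ is minimal, $\pi(x)$ is a minimal point, so $\mathcal{N}$ is syndetic. For any $z\in U$ and any $n\in\mathcal{N}$, the choice of $\kappa$ yields $\varrho_Y(S^n\pi(z),S^n\pi(x))<\varepsilon/2$, while $n\in\mathcal{N}$ gives $\varrho_Y(S^n\pi(x),y_0)<\varepsilon/2$; by the triangle inequality $S^n\pi(z)$ lies within $\varepsilon$ of $y_0$, hence inside $V$, so $T^n z\in\pi^{-1}(V)$. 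Thus $T^n U\subset\pi^{-1}(V)$ and $\diam(T^n U)<\delta$, establishing $x\in\Eq_{\mathrm{syn}}(X,T)$.

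The main obstacle, relative to Lemma \ref{1407070139}, is that the ``good'' point $y_0$ lies off to the side of $\pi(x)$, so one must run two independent $\varepsilon/2$ estimates in tandem: uniform equicontinuity of $S$ controls the spread of $S^n\pi(U)$ around $S^n\pi(x)$, while minimality of $\pi(x)$ ensures $S^n\pi(x)$ returns into a small $V'$ about $y_0$ along a syndetic set. Combining the two deposits the whole set $T^n U$ inside the small fibre $\pi^{-1}(V)$, which is where $\delta$ enters and closes the argument.
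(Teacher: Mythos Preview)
Your proof is correct and follows essentially the same route as the paper's (suppressed) argument, which the paper indicates is ``similar to Lemma~\ref{1407070139}'' and identical to \cite[Lemma~6.3]{HKKZ2016}. Both proofs pick a point $y_0$ with singleton fibre to obtain a small $\pi^{-1}(V)$, use equicontinuity of $(Y,S)$ to keep $S^n$-images of a small ball close together, and use minimality of $(Y,S)$ to drive the orbit of $\pi(x)$ near $y_0$ along a syndetic set; the only cosmetic difference is that you run an explicit $\varepsilon/2$ triangle-inequality estimate with $\mathcal{N}=N_S(\pi(x),V')$, whereas the paper first fixes a ball $B\ni y_0$ with $S^nB\subset V$ along a syndetic $\mathcal{N}$ and then shifts by some $N$ to push $\pi(U)$ into $B$.
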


In order to prove the another direction of Theorem \ref{1407120539}, we make some preparations.
Let $\pi: (X, T)\rightarrow (Y, S)$ be a factor map between dynamical systems. Recall that $\pi$ is proximal
if any pair of points $x_1, x_2\in X$ is proximal for $(X,T)$ whenever $\pi (x_1)= \pi (x_2)$.  The following result should be well known,
but we fail to find a reference and hence provide a proof of it here for completeness. This variant of a
very short proof was kindly communicated to us by Joseph Auslander and works for systems of any acting group.

\begin{lem} \label{added}
Let $\pi_i: (X_i, T_i)\rightarrow (Y_i, S_i), i= 1, 2$ be factor maps between dynamical systems. If both $\pi_1$ and $\pi_2$ are
proximal, then the product factor map $\pi_1\times \pi_2: (X_1\times X_2, T_1\times T_2)\rightarrow (Y_1\times Y_2, S_1\times S_2)$
is also proximal.
\end{lem}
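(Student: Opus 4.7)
The plan is to use the classical orbit-closure argument: given a fiber-preserving pair, reduce to a minimal point in a suitable product, and then exploit the interplay between minimality and proximality to force it onto the diagonal.

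First I would set up notation. Let $(x_1, x_2), (x_1', x_2') \in X_1 \times X_2$ with $(\pi_1 \times \pi_2)(x_1, x_2) = (\pi_1 \times \pi_2)(x_1', x_2')$, that is, $\pi_1(x_1) = \pi_1(x_1')$ and $\pi_2(x_2) = \pi_2(x_2')$. My goal is to show that $((x_1, x_2), (x_1', x_2'))$ is a proximal pair for $T_1 \times T_2$. Working on the four-fold product, consider the point $z = ((x_1, x_2), (x_1', x_2'))$ viewed as an element of $(X_1 \times X_2) \times (X_1 \times X_2)$ under the diagonal action $T := (T_1 \times T_2) \times (T_1 \times T_2)$.

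Next, by Zorn's lemma applied to the closed invariant set $\overline{\operatorname{orb}_T(z)}$, I can choose a minimal subset $M \subset \overline{\operatorname{orb}_T(z)}$ and pick a point $w = ((a_1, a_2), (a_1', a_2')) \in M$. Then $w$ is a minimal point of $T$. Projecting onto the first and third coordinates, $(a_1, a_1')$ is a minimal point of $T_1 \times T_1$, since projection of a minimal set under the factor map $(X_1 \times X_2)^2 \to X_1 \times X_1$ yields a minimal set. Similarly $(a_2, a_2')$ is a minimal point of $T_2 \times T_2$. Moreover, the relation $\pi_1 \circ T_1 = S_1 \circ \pi_1$ together with $\pi_1(x_1) = \pi_1(x_1')$ guarantees $\pi_1(a_1) = \pi_1(a_1')$ throughout $\overline{\operatorname{orb}_T(z)}$; likewise $\pi_2(a_2) = \pi_2(a_2')$.

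The key step is now to invoke proximality of each $\pi_i$: since $\pi_1(a_1) = \pi_1(a_1')$, the pair $(a_1, a_1')$ is proximal for $T_1$, so $\overline{\operatorname{orb}_{T_1 \times T_1}(a_1, a_1')}$ meets the diagonal $\Delta_{X_1}$. But $(a_1, a_1')$ is minimal, and $\Delta_{X_1}$ is closed and $T_1 \times T_1$-invariant, hence the whole orbit closure lies in $\Delta_{X_1}$, forcing $a_1 = a_1'$. The identical reasoning gives $a_2 = a_2'$. Consequently $w = ((a_1, a_2), (a_1, a_2))$ lies on the diagonal of $(X_1 \times X_2)^2$, which means $\overline{\operatorname{orb}_T(z)}$ intersects the diagonal, i.e., $((x_1, x_2), (x_1', x_2'))$ is proximal for $T_1 \times T_2$, as required.

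The main obstacle I expect is the cleanest way to state and apply the ``minimal plus proximal implies diagonal'' principle; it is a short deduction from the definitions but deserves a careful sentence so that the argument reads correctly for any acting group (as the authors emphasise). Everything else is routine preservation of minimality under factor projections and the standard Zorn's lemma appeal.
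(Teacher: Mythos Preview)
Your proposal is correct and follows essentially the same argument as the paper: pass to a minimal point in the orbit closure of the four-fold product, project to each $X_i\times X_i$ to get a minimal proximal pair in the same $\pi_i$-fiber, and conclude that it lies on the diagonal. You have simply spelled out more explicitly the preservation of the fiber relation along the orbit closure and the ``minimal plus proximal implies equal'' step, which the paper leaves implicit.
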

\begin{proof}
Let a pair of points $(x_1, x_2), (x_1^*, x_2^*)\in X_1\times X_2$ with $\pi_i (x_i)= \pi_i (x_i^*)$ for $i= 1, 2$. We will show that
this pair is proximal. In fact, take a minimal point
$((z_1, z_2), (z_1^*, z_2^*))$ from the orbit closure of $((x_1, x_2), (x_1^*, x_2^*))$ in the system $(X_1\times X_2\times X_1\times X_2, T_1\times T_2\times T_1\times T_2)$. Then $(z_1, z_1^*)$ is a minimal point in the system $(X_1\times X_1, T_1\times T_1)$, and $\pi_1 (z_1)= \pi_1 (z_1^*)$ and hence $(z_1, z_1^*)$ is proximal, which implies $z_1= z_1^*$. Similarly $z_2= z_2^*$. In particular, $(x_1, x_2)$ and $(x_1^*, x_2^*)$ are proximal.
\end{proof}

%

Then we have the following

\begin{prop} \label{1407150743}
Let $\pi: (X, T)\rightarrow (Y, S)$  be a factor, not almost one-to-one map between minimal systems, where $(Y, S)$ is invertible.
Then $\inf\limits_{y\in Y} \diam (\pi^{- 1} y)> 0$. Moreover, if $\pi$ is also proximal,
then $(X, T)$ is thickly sensitive.
\end{prop}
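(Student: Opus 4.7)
For the first conclusion $\inf_y \diam(\pi^{-1}(y)) > 0$, the plan is to combine upper semi-continuity of $f(y) := \diam(\pi^{-1}(y))$ with Baire category and the minimality of $(Y, S)$. Since $\pi$ is not almost one-to-one, the set $Y_0 = \{y : f(y) = 0\}$ fails to be dense in $Y$, so there exists an opene $V \subseteq Y$ on which $f > 0$. Writing $V = \bigcup_{n \geq 1} (V \cap \{f \geq 1/n\})$ expresses $V$ as a countable union of sets closed in $V$ (by upper semi-continuity), and Baire's theorem produces an opene $V' \subseteq V$ and $\beta > 0$ with $f \geq \beta$ on $V'$. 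By the minimality of $(Y, S)$ together with compactness, there is $N \in \N$ with $Y = \bigcup_{k=0}^{N} S^{-k} V'$. Given any $y \in Y$, choose $k \leq N$ with $S^k y \in V'$; the invertibility of $S$ yields the key identity $T^k \pi^{-1}(y) = \pi^{-1}(S^k y)$, so $\diam(T^k \pi^{-1}(y)) \geq \beta$. A common modulus of continuity $\omega$ for the finite family $\{T^0, T^1, \ldots, T^N\}$ on the compact space $X$ then forces $\diam(\pi^{-1}(y)) \geq \omega^{-1}(\beta) =: \alpha > 0$, uniformly in $y$.

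For the ``moreover'' conclusion I plan to argue by contradiction using Theorem \ref{1407181746}. Assuming $(X, T)$ is not thickly sensitive and recalling that $\Tran(X, T) = X$ by minimality, Theorem \ref{1407181746} forces $X = \text{Eq}_{\text{syn}}(X, T)$: every point of $X$ is syndetically equicontinuous. Using the uniform positivity of the fiber diameter just established, pick $x_1 \in X$ and $x_2 \in \pi^{-1}(\pi(x_1))$ with $\varrho(x_1, x_2) \geq \alpha/2$. By the proximality of $\pi$, the pair $(x_1, x_2)$ is proximal. Its orbit closure $Z := \overline{\orb_{T \times T}(x_1, x_2)}$ is contained in $R_\pi := \{(a, b) : \pi(a) = \pi(b)\}$, and every pair in $R_\pi$ is proximal. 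Since any minimal subset of $(X \times X, T \times T)$ contained in $R_\pi$ consists of points that are both minimal and proximal, it must lie on $\Delta_X$; the minimality of $(X, T)$ then identifies it with $\Delta_X$ itself. Hence $\Delta_X \subseteq Z$, and for every $z \in X$ there is a sequence $m_k^{(z)} \to \infty$ with $T^{m_k^{(z)}}(x_1, x_2) \to (z, z)$.

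Finally, fix $\epsilon := \alpha/16$. For each $z \in X$, syndetic equicontinuity at $z$ provides an open neighborhood $W_z \ni z$ and a syndetic $\mathcal{M}_z \subseteq \N$ with $\diam(T^n W_z) \leq \epsilon$ for $n \in \mathcal{M}_z$. Taking $k$ large enough so that $T^{m_k^{(z)}} x_1, T^{m_k^{(z)}} x_2 \in W_z$ forces $\varrho(T^{n + m_k^{(z)}} x_1, T^{n + m_k^{(z)}} x_2) \leq \epsilon$ for every $n \in \mathcal{M}_z$, so $E_\epsilon := \{m : \varrho(T^m x_1, T^m x_2) \leq \epsilon\}$ contains a translate of a syndetic set and is itself syndetic. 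The crux of the plan, which I expect to be the main technical obstacle, is to exploit $E_\epsilon$ together with the syndetic return sets $\{m : T^m x_i \in B(x_i, \epsilon)\}$ (syndetic by the minimality of the points $x_1, x_2$) to locate some $m$ at which $T^m x_1 \approx x_1$, $T^m x_2 \approx x_2$, and $\varrho(T^m x_1, T^m x_2) \leq \epsilon$ hold simultaneously; such an $m$ would give $\varrho(x_1, x_2) \leq 3\epsilon < \alpha/2$ and contradict the choice of $(x_1, x_2)$. Since a bare intersection of three syndetic subsets of $\N$ can be empty, the resolution is to iterate the syndetic-equicontinuity argument along the orbit of $(x_1, x_2)$ in $Z$, using that $\Delta_X \subseteq Z$ to align the shifts $m_k^{(z)}$ with the required return windows so that the alignment can be effected.
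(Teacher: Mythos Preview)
Your argument for $\inf_y\diam(\pi^{-1}y)>0$ is correct and close in spirit to the paper's: you invoke Baire on the level sets of the upper semi-continuous function $f(y)=\diam(\pi^{-1}y)$ to find an opene set on which $f\ge\beta$, then spread this bound over $Y$ by minimality and the identity $T^k\pi^{-1}(y)=\pi^{-1}(S^ky)$ (which indeed uses the invertibility of $S$); the paper does the same spreading after instead passing through the residual set of continuity points of $f$.

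For the thick-sensitivity conclusion, however, there is a real gap. Your endgame seeks an $m$ with $T^m x_1\in B(x_1,\epsilon)$, $T^m x_2\in B(x_2,\epsilon)$, and $\varrho(T^m x_1,T^m x_2)\le\epsilon$ simultaneously; but these three conditions are mutually inconsistent as soon as $\varrho(x_1,x_2)>3\epsilon$, so the set of such $m$ is empty \emph{unconditionally}. You therefore cannot derive its nonemptiness from syndetic equicontinuity merely by observing that $E_\epsilon$, $N_T(x_1,B(x_1,\epsilon))$, $N_T(x_2,B(x_2,\epsilon))$ are each syndetic --- three syndetic sets may (and here must) have empty intersection --- and the proposed ``iteration along the orbit in $Z$'' adds nothing new: choosing $z\in X$ and $m_k$ with $(T^{m_k}x_1,T^{m_k}x_2)\to(z,z)$ only yields $m_k+\mathcal{M}_z\subset E_\epsilon$ and gives no control of $T^{m_k+n}x_i$ relative to $x_i$. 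The paper avoids this cul-de-sac with a direct construction rather than a contrapositive one: given opene $U$ and $m\in\N$ it shrinks to $V\subset U$ with $\diam(T^iV)<\delta$ for $i\le m$, uses almost-openness of $\pi$ to locate, for each $i$, a point $u_i$ in the same fiber as some point of $T^iV$ but far from $T^iV$, and then applies Lemma~\ref{added} (products of proximal extensions are proximal) so that a suitable $(x_0,\dots,x_m)\in\prod_i T^iV$ is proximal to a minimal point near $(u_0,\dots,u_m)$. The resulting visit set is central, hence contains a $\Delta$-set, which must meet the $\Delta^*$-set $N_T(V,V)$; any $n$ in the intersection gives $\{n,\dots,n+m\}\subset S_T(U,\delta)$. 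The Furstenberg $\Delta$/$\Delta^*$ interaction, together with Lemma~\ref{added}, is the engine your outline is missing.
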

\begin{proof}
As $(Y, S)$ is an invertible minimal system, it is not hard to show that $\pi^{- 1} (y)$ is
not a singleton for any $y\in Y$. Let us first prove that $d:=\inf\limits_{y\in Y} \diam (\pi^{- 1} y)> 0$.

Let $\psi: Y\rightarrow [0,\diam(X)]$ be given by $y\mapsto \diam (\pi^{-1} y)$, and hence for
 each $y\in Y$ one has $\psi (y)> 0$ as $\pi^{- 1} (y)$ is not a singleton.
Since the function $\psi$ is upper semi-continuous, $E_c(\psi)$ - the set of all points of continuity of
$\psi$, is a residual subset of $Y$ (see for example \cite[Lemma 1.28]{Furstenberg1981}). Suppose that $d= 0$.
So, there exists a sequence of points $y_i\in Y$  such that  $\lim\limits_{i\rightarrow \infty} \psi (y_i)= 0$.

Let $y_c\in E_c(\psi)$
and $\varepsilon> 0$. There exists
 open $V\subset Y$ containing $y_c$ such that $|\psi (y_c)- \psi (y)|\le \varepsilon$ whenever $y\in V$. Since $(Y, S)$ is minimal, there
 exists $m\in \mathbb{N}$ with $\bigcup\limits_{j= 0}^m S^{- j} V= Y$. By taking a subsequence (if necessary) we may
 assume that $\{y_i: i\in \mathbb{N}\}\subset S^{-k} V$ for some $k\in \{0, 1, \dots, m\}$. Since $\lim\limits_{i\rightarrow
 \infty} \psi (y_i)= 0$, in other words, the diameter of $\pi^{- 1} (y_i)$ tends to zero,  the diameter of
$\pi^{- 1} (S^k y_i)= T^k \pi^{- 1} (y_i)$ also tends to zero. Therefore $\lim\limits_{i\rightarrow \infty} \psi (S^k y_i)= 0$,
which implies that $\psi (y_c)\le \varepsilon$ by the construction of $V$ and $m$. Finally $\psi (y_c)= 0$, a contradiction.

Take $0< \delta< \dfrac{d}{6}$. Now assume that $\pi$ is  proximal. We shall prove that $(X, T)$ is thickly sensitive with a sensitivity constant $\delta$. Let $x_*\in X$
and $m\in \mathbb{N}$ and take open $U\subset X$ containing $x_*$. Let  $V\subset U$ be an open set containing $x_*$ with
$\max\limits_{0\le i\le m} \diam (T^i V)< \delta$. Since a factor map between minimal systems is almost open
\cite[Theorem 1.15]{Auslander1988}, therefore for each $i= 0, 1, \dots, m$ we can choose $y_i\in \inter (\pi (T^i V))$
(the interior of $\pi (T^i V)$),  $u_i\in \pi^{- 1} (y_i)$ with $\dist (u_i, T^i V)> \dfrac{d}{2}- \delta$
(because $\diam (T^i V)< \delta$), and set
$$W_i= \{x\in X: \varrho (x, u_i)< \delta\}\cap \pi^{- 1} (\inter (\pi (T^i V)))\ni u_i.$$
Obviously, $\dist (W_i, T^i V)> \dfrac{d}{2}- 2 \delta> \delta$ for each $i= 0, 1, \dots, m$.

Note that since $(X, T)$ is minimal, the set of all minimal points
of the system $(X^{m+ 1}, T^{(m+ 1)})$, the product system of $m+1$ copies of $(X, T)$,  is dense in $X^{m+ 1}$.  Hence we can take a minimal point
$(v_0, v_1, \dots, v_m)\in W_0\times W_1\times \dots\times W_m$ of the system $(X^{m+ 1}, T^{(m+ 1)})$, and
let $x_i\in T^i V$ with $\pi (x_i)= \pi (v_i)$ (because $\pi (v_i)\in \pi (T^i V)$) for
 each $i= 0, 1, \dots, m$. Since the factor map
 $\pi: (X, T)\rightarrow (Y, S)$ is proximal,  by Lemma \ref{added},
$$\pi': (X^{m+ 1}, T^{(m+ 1)})\rightarrow (Y^{m+ 1}, S^{(m+ 1)}), (x_i': 0\le i\le m)\mapsto (\pi (x_i'): 0\le i\le m)$$
is also a proximal factor map. In particular, $((x_0, x_1, \dots, x_m), (v_0, v_1, \dots, v_m))$ is proximal (under the
action $T^{(m+ 1)}$), and thus
$$\mathcal{S}= N_{T^{(m+ 1)}} ((x_0, x_1, \dots, x_m), W_0\times W_1\times \dots\times W_m)$$
is a central set and  contains a $\Delta$-set by \cite{Furstenberg1981}. Finally
$\mathcal{S}\cap \mathcal{N}\neq \varnothing $ where $\mathcal{N}= N_T (V, V)\subset N_T (T V, T V)\subset \dots \subset N_T (T^m V, T^m V)$
is a $\Delta^*$-set \cite{Furstenberg1981}. Now for any $n\in \mathcal{S}\cap \mathcal{N}$ and each $i= 0, 1, \dots, m$, on one hand
 $T^n x_i\in W_i$ as $n\in \mathcal{S}$, and hence $T^{n+ i} V\cap W_i\ni T^n x_i$ as $x_i\in T^i V$; on the other hand
 $T^{n+ i} V\cap T^i V\neq \varnothing $ as $n\in \mathcal{N}$, therefore $\diam (T^{n+ i} V)\ge \dist (W_i, T^i V)> \delta$.
  Thus
$$S_T (U, \delta)\supset S_T (V, \delta)\supset \{n+ i: n\in \mathcal{S}\cap \mathcal{N}, i= 0, 1, \dots, m\},$$
which implies that $(X, T)$ is thickly sensitive by the arbitrariness of $U$ and $m$.
\end{proof}


The following lemma  is just a reformulation of \cite[Theorem 1.1]{Veech1968}.

\begin{lem} \label{1407160355}
Let $(X, T)$ be an invertible minimal system and $x, x'\in X$. Then $(x, x')\in Q_+ (X, T)$ if and only if for every open
 $U, V\subset X$ containing $x$ and  $x'$, respectively, there exist $n_1, m_1\in \mathbb{Z}$ such that
  $T^{n_1} x, T^{n_1+ m_1} x\in U$ and $T^{m_1} x\in V$.
\end{lem}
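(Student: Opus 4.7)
The plan is to prove the two directions of the equivalence separately, translating between the defining condition of $Q_+(X,T)$ and the orbit condition on $x$, leveraging density of orbits (from minimality) and invertibility in essential ways.

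For $(\Leftarrow)$, I would fix $\varepsilon>0$, choose an open set $U\ni x$ with $\diam(U)<\varepsilon$ and an open set $V\ni x'$ with $V\subset \{y\in X:\varrho(x',y)<\varepsilon\}$, and apply the hypothesis to obtain $n_1,m_1\in\mathbb{Z}$ with $T^{n_1}x,T^{n_1+m_1}x\in U$ and $T^{m_1}x\in V$. Taking the witnessing pair $(a,b)=(x,T^{m_1}x)$ gives $\varrho(x,a)=0$, $\varrho(x',b)<\varepsilon$, and
\begin{equation*}
\varrho(T^{n_1}a,T^{n_1}b)=\varrho(T^{n_1}x,T^{n_1+m_1}x)<\diam(U)<\varepsilon.
\end{equation*}
When $n_1\ge 1$ this is precisely the defining condition for $(x,x')\in Q_+(X,T)$. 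For $n_1\le 0$ I would observe that the forward-orbit closure of $(x,T^{m_1}x)$ in $(X\times X,T\times T)$ is the set $\{(p,T^{m_1}p):p\in X\}$, which carries a minimal action isomorphic to $(X,T)$ via the first coordinate; consequently, joint forward returns to any open neighborhood of $(T^{n_1}x,T^{n_1+m_1}x)$ are syndetic, so the non-positive $n_1$ can be replaced by a positive integer while keeping both iterates of $x$ inside $U$.

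For $(\Rightarrow)$, the key reduction is this: given $m_1\in\mathbb{Z}$ with $T^{m_1}x\in V$, the existence of $n_1$ with $T^{n_1}x,T^{n_1+m_1}x\in U$ is equivalent to $T^{n_1}x\in U\cap T^{-m_1}U$, which by minimality of $(X,T)$ is possible exactly when $U\cap T^{-m_1}U\neq\varnothing$, i.e., $m_1\in N_T(U,U)$. The task therefore reduces to showing
\begin{equation*}
N_T(x,V)\cap N_T(U,U)\neq\varnothing.
\end{equation*}
Minimality makes $N_T(x,V)$ syndetic, and the Furstenberg fact recalled in the Preliminaries makes $N_T(U,U)$ a $\Delta^*$-set, but these two properties do not automatically force intersection. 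This is where the hypothesis $(x,x')\in Q_+(X,T)$ enters: by the definition of $Q_+$, one can choose $a\in U$ near $x$, $b\in V$ near $x'$, and $n\in\mathbb{N}$ with $\varrho(T^n a,T^n b)$ arbitrarily small; then by Auslander's theorem the pair $(a,b)$ is proximal, inside $(X\times X,T\times T)$, to some minimal point of its orbit closure, and the $Q_+$-approximation (together with invertibility) forces a minimal point proximal to $(a,b)$ to sit inside $U\times V$ after suitably shrinking $\varepsilon$. Consequently $N_{T\times T}((a,b),U\times V)$ is central, hence contains a $\Delta$-set, which must meet the $\Delta^*$-set $N_T(U,U)$; transferring back via orbit density of $x$ produces the desired $m_1$.

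The main obstacle is this last step of $(\Rightarrow)$: using the approximation data from $(x,x')\in Q_+(X,T)$ to guarantee that a minimal point proximal to $(a,b)$ lies inside $U\times V$. This is the substantive content of Veech's theorem in \cite{Veech1968}, which the present lemma merely repackages; the rest of the argument is routine bookkeeping combining orbit density, invertibility, and the combinatorial facts about $\Delta^*$-sets recorded in the Preliminaries.
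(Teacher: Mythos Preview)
The paper does not actually prove this lemma: immediately before the statement it says ``The following lemma is just a reformulation of \cite[Theorem 1.1]{Veech1968}'', and no proof is given. So there is nothing in the paper to compare your argument against; the authors are simply quoting Veech.

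Your $(\Leftarrow)$ direction is correct and cleanly handled, including the fix for $n_1\le 0$ via the minimal diagonal-translate $\{(p,T^{m_1}p):p\in X\}$.

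Your $(\Rightarrow)$ direction, however, is not a proof but a reduction back to the theorem you are trying to prove. You yourself say the crucial step --- that a minimal point proximal to $(a,b)$ can be forced to lie in $U\times V$ --- ``is the substantive content of Veech's theorem in \cite{Veech1968}, which the present lemma merely repackages''. That is exactly right, and it means your sketch has not supplied an independent argument for this direction. There is also a secondary gap: even granting that $N_{T\times T}((a,b),U\times V)$ is central and meets $N_T(U,U)$, the conclusion you obtain is $T^{m}b\in V$ for some $m\in N_T(U,U)$, whereas what you need is $T^{m_1}x\in V$. Since $b$ is near $x'$ (not a forward iterate of $x$), the phrase ``transferring back via orbit density of $x$'' does not obviously close this gap; the point $b$, and hence $m$, changes with $\varepsilon$, so one cannot simply pass to a limit.

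In short: your assessment is accurate --- the lemma \emph{is} Veech's theorem in disguise --- and the paper treats it exactly that way, by citation rather than proof. If you want a self-contained argument you must reproduce Veech's proof (which uses the structure of the Ellis semigroup / almost periodic points in the product rather than the central/$\Delta^*$ combinatorics you outline).
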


Now let us show that it is also true for any (not only invertible) continuous minimal map. Recall that $\mathcal{S}\subset \mathbb{N}$ is an \emph{IP set} if there exists $\{p_k: k\in \mathbb{N}\}\subset \mathbb{N}$ with $\{p_{i_1}+ \dots+ p_{i_k}: k\in \mathbb{N}\
 \text{and}\ i_1< \dots< i_k\}\subset \mathcal{S}$.

\begin{lem} \label{1407160403}
Let $(X, T)$ be a minimal system and $x, y\in X$. Then $(x, y)\in Q_+ (X, T)$ if and only if for every open
 $U,V \subset X$ containing $x$ and $y$, respectively, there exist $n, m\in \mathbb{N}$ such that
  $T^n x, T^{n+ m} x\in U$ and $T^m x\in V$.
\end{lem}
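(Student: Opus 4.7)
The converse direction is a direct verification: given $\varepsilon>0$, apply the hypothesis with $U=B(x,\varepsilon)$ and $V=B(y,\varepsilon)$ to obtain $n,m\in\mathbb{N}$. Setting $x'=x$, $y'=T^m x$, and $k=n$, we have $\varrho(x,x')=0$, $\varrho(y,y')<\varepsilon$, and $\varrho(T^k x',T^k y')=\varrho(T^n x,T^{n+m}x)<2\varepsilon$; since $\varepsilon$ is arbitrary, $(x,y)\in Q_+(X,T)$.

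For the forward direction I will pass to the natural extension $(\widehat{X},\widehat{T})$, which is invertible and minimal by the properties recorded in the Preliminaries, and reduce to the invertible statement Lemma \ref{1407160355}. Writing $\widehat{\pi}\colon\widehat{X}\to X$ for the factor map, any two lifts of the same point of $X$ coincide in every coordinate past the first, so $d(\widehat{T}^n\hat{z},\widehat{T}^n\hat{z}')\le 2^{-n}$ whenever $\widehat{\pi}(\hat{z})=\widehat{\pi}(\hat{z}')$; thus $\widehat{\pi}$ is an asymptotic extension. A standard consequence, and the main technical obstacle of the proof, is that the regionally proximal relation lifts: $(\widehat{\pi}\times\widehat{\pi})^{-1}(Q_+(X,T))=Q_+(\widehat{X},\widehat{T})$. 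Given approximations of $(x,y)$ in $X$ from the definition of $Q_+$, one verifies the nontrivial inclusion by lifting these approximations to $\widehat{X}$ so that the tail coordinates (which carry only exponentially small weight) match those of prescribed $\hat{x},\hat{y}$, exploiting the invertibility of $\widehat{T}$ to push the approximations through sufficiently deep backward iterates. This produces lifts $\hat{x}\in\widehat{\pi}^{-1}(x)$ and $\hat{y}\in\widehat{\pi}^{-1}(y)$ with $(\hat{x},\hat{y})\in Q_+(\widehat{X},\widehat{T})$.

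Apply Lemma \ref{1407160355} to $(\widehat{X},\widehat{T})$ with the open sets $\widehat{\pi}^{-1}(U)\ni\hat{x}$ and $\widehat{\pi}^{-1}(V)\ni\hat{y}$ to obtain $n_1,m_1\in\mathbb{Z}$ with $\widehat{T}^{n_1}\hat{x},\widehat{T}^{n_1+m_1}\hat{x}\in\widehat{\pi}^{-1}(U)$ and $\widehat{T}^{m_1}\hat{x}\in\widehat{\pi}^{-1}(V)$. To convert $\mathbb{Z}$ into $\mathbb{N}$, choose an open neighborhood $W\ni\hat{x}$ small enough that, by continuity of $\widehat{T}^{n_1}$, $\widehat{T}^{m_1}$, and $\widehat{T}^{n_1+m_1}$ at $\hat{x}$, the inclusion $\widehat{T}^k\hat{x}\in W$ forces $\widehat{T}^{k+n_1}\hat{x},\widehat{T}^{k+n_1+m_1}\hat{x}\in\widehat{\pi}^{-1}(U)$ and $\widehat{T}^{k+m_1}\hat{x}\in\widehat{\pi}^{-1}(V)$. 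Since $\hat{x}$ is a minimal point of a minimal system, the return set $N_{\widehat{T}}(\hat{x},W)$ is a central set and hence, by the fact recalled in the Preliminaries, contains a $\Delta$-set $\{s_i-s_j:i>j\}$. Pick indices $a<b<c$ so that $K:=s_b-s_a>|n_1|$ and $L:=s_c-s_b>|m_1|$; then $K+L=s_c-s_a$ also belongs to the $\Delta$-set, hence to $N_{\widehat{T}}(\hat{x},W)$. Setting $n=K+n_1$ and $m=L+m_1$, both natural numbers, all three required conditions hold in $\widehat{X}$, and projecting by $\widehat{\pi}$ yields $T^n x, T^{n+m}x\in U$ and $T^m x\in V$.
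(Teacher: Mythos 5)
Your proof follows essentially the same route as the paper's: pass to the invertible minimal natural extension, lift the regionally proximal pair (the paper simply cites \cite[Lemma A.3]{HuangYe2000} for exactly this step, which you sketch), apply Lemma \ref{1407160355} there, and then use the combinatorial richness of the return set $N_{\widehat{T}}(\hat x,W)$ to shift $n_1,m_1\in\mathbb{Z}$ into $\mathbb{N}$ (you go via central sets and a $\Delta$-set where the paper uses the IP-set structure of return times of a recurrent point; both are valid and the final bookkeeping with $K,L,K+L$ is correct). One small correction: two lifts of the same point of $X$ agree only in the \emph{first} coordinate, not in the coordinates past the first; the asymptotic estimate for $\widehat{\pi}$ nevertheless holds because $\widehat{T}^n\hat z$ and $\widehat{T}^n\hat z'$ agree in their first $n+1$ coordinates while the remaining coordinates carry total weight at most $2^{-(n+1)}$.
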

\begin{proof}
Firstly assume that, for every open
 $U, V\subset X$ containing $x$ and $y$, respectively, there exist $n, m\in \mathbb{N}$ such that
  $T^n x, T^{n+ m} x\in U$ and $T^m x\in V$. From the definition it is readily to obtain $(x, y)\in Q_+ (X, T)$, for instance by taking
  $x'=x, y'= T^m(x)$.

  Now assume $(x, y)\in Q_+ (X, T)$ and take open
 $U, V\subset X$ containing $x$ and $y$, respectively.
Recall that $(\widehat{X}, \widehat{T})$ is the natural extension of $(X, T)$ and $\widehat{\pi}: (\widehat{X}, \widehat{T})\rightarrow (X, T)$ is the associated factor map. Hence
$Q_+ (X, T)=
  (\widehat{\pi}\times \widehat{\pi}) Q_+ (\widehat{X}, \widehat{T})$
 by \cite[Lemma A.3]{HuangYe2000}.
 In particular, there exist $(x_*, y_*)\in Q_+ (\widehat{X}, \widehat{T})$ and open $U_*, V_* \subset \widehat{X}$
 containing $x_*$ and $y_*$, respectively, such that $\widehat{\pi} (x_*)= x$, $\widehat{\pi} (y_*)= y$ and
 $\widehat{\pi} (U_*)\subset U$, $\widehat{\pi} (V_*)\subset V$.
 Since $(X, T)$ is minimal, $(\widehat{X}, \widehat{T})$
 is an invertible minimal system, and then by applying Lemma \ref{1407160355}
there exist $n_1, m_1\in \mathbb{Z}$ such that
  $\widehat{T}^{n_1} x_*, \widehat{T}^{n_1+ m_1} x_*\in U_*$ and $\widehat{T}^{m_1} x_*\in V_*$. Moreover,
  we choose open $W\subset \widehat{X}$ containing $x_*$ such that $\widehat{T}^{n_1} W\subset U_*, \widehat{T}^{n_1+ m_1} W\subset U_*$
   and $\widehat{T}^{m_1} W\subset V_*$. Since $(\widehat{X}, \widehat{T})$ is minimal, $x_*$ is recurrent in the sense that
     $\widehat{T}^{l_k}
   x_*$ tends to $x_*$ for a sequence of positive integers $l_1< l_2< \dots$, and so $N_{\widehat{T}}
   (x_*, W)$ is an IP set by \cite[Theorem 2.17]{Furstenberg1981}. Hence there exist $p_1, q_1\in \mathbb{N}$ such that
   $$n= n_1+ p_1> 0, m= m_1+ q_1> 0\ \text{and}\ \{p_1, q_1, p_1+ q_1\}\subset N_{\widehat{T}}
   (x_*, W).$$
Thus $\widehat{T}^n x_*, \widehat{T}^{n+ m} x_*\in U_*$ and $\widehat{T}^m x_*\in V_*$. Therefore $T^n x, T^{n+ m} x\in U$ and $T^m x\in V$
by the above construction. This finishes the proof.
\end{proof}

With the help of Lemma \ref{1407160403}, using an idea of the proof of \cite[Lemma 2.1.2]{Veech1965} we obtain the following result,
which is of independent interest.

\begin{prop} \label{1407160718}
Let $(X, T)$ be a minimal system and $x, y\in X$. Then $(x, y)\in Q_+ (X, T)$ if and only if $N_T (x, U)$ contains a $\Delta$-set for any
open $U\subset X$ containing $y$.
\end{prop}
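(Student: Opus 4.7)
For the $(\Leftarrow)$ direction, the plan is direct verification via compactness. Fix $\varepsilon > 0$ and set $U := \{z \in X : \varrho(z, y) < \varepsilon\}$. By hypothesis, there exist $s_1 < s_2 < \cdots$ in $\mathbb{N}$ with $T^{s_i - s_j} x \in U$ for all $i > j$. Passing to a subsequence $\{s_{k_\ell}\}$ along which $T^{s_{k_\ell}} x$ converges in $X$, for $\ell_1 < \ell_2$ sufficiently large one has $\varrho(T^{s_{k_{\ell_1}}} x, T^{s_{k_{\ell_2}}} x) < \varepsilon$. Setting $x' := x$, $y' := T^{s_{k_{\ell_2}} - s_{k_{\ell_1}}} x$, and $n := s_{k_{\ell_1}}$ gives $\varrho(x, x') = 0$, $\varrho(y, y') < \varepsilon$ (since $s_{k_{\ell_2}} - s_{k_{\ell_1}} \in N_T(x, U)$), and $\varrho(T^n x', T^n y') = \varrho(T^{s_{k_{\ell_1}}} x, T^{s_{k_{\ell_2}}} x) < \varepsilon$, yielding $(x, y) \in Q_+(X, T)$.

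For $(\Rightarrow)$, the first step is to reduce to the invertible case via the natural extension. Lift $(x, y)$ to $x^*, y^* \in \widehat{X}$ with $\widehat{\pi}(x^*) = x$, $\widehat{\pi}(y^*) = y$, and $(x^*, y^*) \in Q_+(\widehat{X}, \widehat{T})$, using \cite[Lemma A.3]{HuangYe2000} exactly as in the proof of Lemma \ref{1407160403}. For any open $U \ni y$ in $X$, pick open $\widehat{U} \ni y^*$ in $\widehat{X}$ with $\widehat{\pi}(\widehat{U}) \subset U$; a $\Delta$-set contained in $N_{\widehat{T}}(x^*, \widehat{U})$ pushes forward via $\widehat{\pi}$ to a $\Delta$-set in $N_T(x, U)$. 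Thus it suffices to work with an invertible minimal $(X, T)$. Following \cite[Lemma 2.1.2]{Veech1965}, construct an increasing sequence $s_1 < s_2 < \cdots$ satisfying $T^{s_i - s_j} x \in U$ for all $i > j$ by iterating Lemma \ref{1407160355}: at step $k$, apply Veech's lemma to a shrinking neighborhood $W_{k+1}$ of $x$ and to a target neighborhood of $y$ that is shaped by the already-chosen $s_1, \ldots, s_k$ (via intersections with $T$-preimages of a small open set around $y$, which is well defined because $T$ is invertible), so as to produce a new $s_{k+1}$ whose differences with all prior $s_j$ simultaneously lie in $N_T(x, U)$.

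The main obstacle is precisely this inductive step. Adding a single $s_{k+1}$ creates $k$ new pairwise differences $s_{k+1} - s_j$ that must all fall in $N_T(x, U)$ at once, whereas one application of Lemma \ref{1407160355} naturally delivers only a single difference in $N_T(x, U)$. Shaping the target neighborhood of $y$ by intersecting with appropriate $T$-translations is what allows a single invocation of Veech's lemma to control all of the new differences at once; executing this cleanly also requires the inductive data to keep track of the orbit of $y$ along the shifts $s_k - s_j$, either by working inside a suitable minimal subsystem of $(X \times X, T \times T)$ containing a lift of $(x, y)$ or by invoking the IP-set structure of recurrence times guaranteed by Furstenberg's theorem (cf.\ \cite[Theorem 2.17]{Furstenberg1981}). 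Once the infinite sequence is assembled, $\{s_i - s_j : i > j\}$ is the desired $\Delta$-subset of $N_T(x, U)$, completing the proof.
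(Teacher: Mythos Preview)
Your $(\Leftarrow)$ argument is correct; the paper's version is slightly different (it simply uses three indices $s_1<s_2<s_3$ from the $\Delta$-set and sets $x'=x$, $y'=T^{s_2-s_1}x$, $m=s_3-s_2$), but your compactness variant is equally valid.

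For $(\Rightarrow)$ there is a genuine gap. Your proposed mechanism---``shape the target neighborhood of $y$ by intersecting with $T$-preimages of a small ball around $y$''---does not work as stated: to force $T^{s_{k+1}-s_j}x\in U$ for all $j\le k$ you would need $T^m x\in V:=U\cap\bigcap_{j<k}T^{-(s_k-s_j)}U$, but $V$ is \emph{not} a neighborhood of $y$ (the sets $T^{-(s_k-s_j)}U$ contain $T^{-(s_k-s_j)}y$, not $y$), so you have no point $y'$ with $(x,y')\in Q_+$ to which Veech's lemma can be applied with target $V$. Your fallback suggestions (a minimal subsystem of $X\times X$, or an unspecified use of IP-structure) are not fleshed out enough to constitute a proof, and the reduction to the invertible case is an unnecessary detour since Lemma~\ref{1407160403} already holds for arbitrary minimal systems.

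The paper's actual argument is different and much cleaner: it never shapes the neighborhood of $y$. One fixes $\eta=\sum_k\eta_k$ with $B_\eta(y)\subset U$ and, at step $k+1$, applies Lemma~\ref{1407160403} to $B_\delta(x)$ and $B_{\eta_{k+1}}(y)$ with $\delta$ chosen so small (by uniform continuity) that the resulting $n_{k+1}$ and $n_{k+1}+m_{k+1}$ act almost as the identity on the first $l_k=\sum_{i\le k}(n_i+m_i)$ iterates of $x$. One then sets $p_k=m_k+n_{k+1}$ and $s_k=p_1+\dots+p_k$; a telescoping estimate gives $\varrho(T^{p_i+\dots+p_j}x,y)<\eta_i+\dots+\eta_{j+1}<\eta$ for every $i\le j$, so $\{s_j-s_i:i<j\}\subset N_T(x,U)$. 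The whole point is that the \emph{two} outputs $n_{k+1}$ and $m_{k+1}$ of Lemma~\ref{1407160403} are used jointly (the $n$'s provide near-returns to $x$ that make the telescoping work), rather than trying to produce a single $s_{k+1}$ that hits a complicated target in one shot.
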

\begin{proof}
\textit{Sufficiency}. Let $U\subset X$ be an open set containing $y$. Since $N_T (x, U)$ contains a $\Delta$-set by the assumption, there exists
 $\{s_1< s_2< s_3\}\subset
\mathbb{N}$ with $T^{s_3- s_2} x, T^{s_2- s_1} x, T^{s_3- s_1} x\in U$. Let $x'= x, y'= T^{s_2- s_1} x$ and $m= s_3- s_2\in \mathbb{N}$. Then
$T^m x',
T^m y'\in U$ and $(x, y)\in Q_+ (X, T)$ by the arbitrariness of $U$.

\textit{Necessity}. Assume  $(x, y)\in Q_+ (X, T)$ and take open $U\subset X$ containing $y$. Choose positive real numbers $\eta$ and
$\eta_k, k\in \mathbb{N}$ such that $\eta= \sum\limits_{k\in \mathbb{N}} \eta_k$ and $B_\eta (y)\subset U$, where $B_\eta (y)$ denotes
the open ball
of radius $\eta$ centered at $y$. By applying Lemma \ref{1407160403} to $B_{\eta_1} (x)$ and $B_{\eta_1} (y)$, there exist
 $n_1, m_1\in
\mathbb{N}$ such that
\begin{equation*} \label{1407161728}
T^{n_1} x, T^{n_1+ m_1} x\in B_{\eta_1} (x)\ \text{and}\ T^{m_1} x\in B_{\eta_1} (y).
\end{equation*}
Fix a $\delta> 0$. Applying Lemma \ref{1407160403}
   to $B_{\delta} (x)$ and $B_{\eta_1} (y)$, we have  $n_2, m_2\in
\mathbb{N}$ such that
  $T^{n_2} x, T^{n_2+ m_2} x\in B_{\delta} (x)$ and $T^{m_2} x\in B_{\eta_2} (y)$. Since $\delta$ can be selected small enough, we can require additionally
\begin{equation*} \label{1407161729}
\max_{0\le r\le n_1+ m_1} \varrho (T^{r+ n_2} x, T^r x)< \eta_2\ \text{and}\ \max_{0\le r\le n_1+ m_1} \varrho (T^{r+ n_2+ m_2} x, T^r x)< \eta_2.
\end{equation*}
We continue the process by induction. Put $l_k= \sum\limits_{i= 1}^k (n_i+ m_i)$ for each $k\in \mathbb{N}$. Then there exist
$n_{k+ 1}, m_{k+ 1}\in
\mathbb{N}$ such that
$T^{m_{k+ 1}} x\in B_{\eta_{k+ 1}} (y)$,
\begin{equation} \label{1407161734}
\max_{0\le r\le l_k} \varrho (T^{r+ n_{k+ 1}} x, T^r x)< \eta_{k+ 1}\ \text{and}\ \max_{0\le r\le l_k} \varrho (T^{r+ n_{k+ 1}+ m_{k+ 1}} x, T^r x)< \eta_{k+ 1}.
\end{equation}
Set $p_k= m_k+ n_{k+ 1}$ and $s_k= p_1+ \dots+ p_k$ for every $k\in \mathbb{N}$. Then, for all $i\le j$,
\begin{eqnarray*}
\varrho (T^{p_i+ \dots+ p_j} x, y)&= & \varrho (T^{m_i+ \sum\limits_{k= i+ 1}^j (n_k+ m_k) + n_{j+ 1}} x, y)\\
&\le & \varrho (T^{m_i+ \sum\limits_{k= i+ 1}^j (n_k+ m_k) + n_{j+ 1}} x, T^{m_i+ \sum\limits_{k= i+ 1}^j (n_k+ m_k)} x)+ \\
& & \dots+ \varrho (T^{m_i+ (n_{i+ 1}+ m_{i+ 1})} x, T^{m_i} x)+ \varrho (T^{m_i} x, y)\\
&< & \eta_{j+ 1}+ \dots+ \eta_i< \eta\ (\text{using \eqref{1407161734}}).
\end{eqnarray*}
So, $N_T (x, U)$ contains the $\Delta$-set $\{s_j- s_i: i< j\}$ from the construction.
\end{proof}

Recall that $\pi_{\text{eq}}: (X, T)\rightarrow (X_{\text{eq}}, T_{\text{eq}})$ is the corresponding factor map of $(X, T)$ over its maximal equicontinuous factor.

\begin{prop} \label{1407150651}
Let $(X, T)$ be a minimal system. Assume that $\pi_{\text{eq}}: (X, T)\rightarrow (X_{\text{eq}}, T_{\text{eq}})$
is not proximal. Then $(X, T)$ is thickly sensitive.
\end{prop}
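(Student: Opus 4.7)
The plan is to mimic the proof of Proposition \ref{1407150743}, replacing the central set derived there from proximality of the factor map by a $\Delta$-set extracted from the non-proximal pair in $Q_+(X,T)$ via Proposition \ref{1407160718}. The sensitivity constant will come from the asymptotic separation of the orbits of the pair, and the block structure of $S_T(U,\delta)$ will come from intersecting a $\Delta$-set with the $\Delta^*$-set $N_T(V,V)$.

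Since $(X,T)$ is minimal, $Q_+(X,T)$ coincides with the $\pi_{\text{eq}}$-equivalence relation, so the hypothesis furnishes a pair $(x,y)\in Q_+(X,T)$ with $\rho:=\liminf_{n\to\infty}\varrho(T^nx,T^ny)>0$. Because both $Q_+$ and the non-proximality of a pair are $T$-invariant, after replacing $(x,y)$ by $(T^{N_0}x,T^{N_0}y)$ for a sufficiently large $N_0$ I may assume $\varrho(T^nx,T^ny)>\rho/2$ for every $n\ge 0$. Set $\delta:=\rho/8$; I will show $(X,T)$ is thickly sensitive with this constant.

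Given opene $U\subset X$ and $k\in\N$, I would shrink $U$ to opene $V\subset U$ with $\diam(T^iV)<\delta$ for $i=0,1,\dots,k$, and use minimality to shift $(x,y)$ once more so that $x\in V$ (the previous properties are preserved). Put $W_i:=\{z\in X:\varrho(z,T^iy)<\delta\}$. Since $T^ix\in T^iV$ while $\varrho(T^ix,T^iy)>\rho/2$ and $\diam(T^iV)<\delta$, a triangle inequality gives $\dist(W_i,T^iV)>\rho/2-2\delta=2\delta$ for every $i\le k$. By uniform continuity of $T,T^2,\dots,T^k$ on the compact space $X$, I can choose $\epsilon_0>0$ so that $T^nx\in B_{\epsilon_0}(y)$ forces $T^{n+i}x\in W_i$ simultaneously for every $i=0,1,\dots,k$.

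A $\Delta/\Delta^*$ pigeonhole then yields the desired block. By Proposition \ref{1407160718}, $\mathcal{S}:=N_T(x,B_{\epsilon_0}(y))$ contains a $\Delta$-set; by Furstenberg's result, $\mathcal{N}:=N_T(V,V)$ is a $\Delta^*$-set, and $\mathcal{N}\subset N_T(T^iV,T^iV)$ for every $i\ge 0$ exactly as in the proof of Proposition \ref{1407150743}. Hence $\mathcal{S}\cap\mathcal{N}\ne\varnothing$, and for any $n$ in the intersection the point $T^{n+i}x$ lies in $W_i\cap T^{n+i}V$ while $T^{n+i}V\cap T^iV$ contains some element $a_i$; these two points of $T^{n+i}V$ are at distance exceeding $\dist(W_i,T^iV)>\delta$, so $\diam(T^{n+i}V)>\delta$ and $\{n,n+1,\dots,n+k\}\subset S_T(U,\delta)$. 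The only non-routine observation, which I expect to be the main conceptual hurdle, is the uniform-continuity upgrade from single-coordinate to multi-coordinate tracking of $y$ by the orbit of $x$: it is precisely what lets the one-coordinate $\Delta$-set provided by Proposition \ref{1407160718} play the role of the multi-coordinate central set that product proximality produced in Proposition \ref{1407150743}. Notably, this approach does not require invertibility of $(X,T)$ or of $(X_{\text{eq}},T_{\text{eq}})$.
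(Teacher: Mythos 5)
Your argument is correct and is essentially the paper's own proof: the paper likewise takes a non-proximal pair $(x_1,x_2)\in Q_+(X,T)$ with fibers identified under $\pi_{\text{eq}}$, sets $\delta$ from $d=\inf_n\varrho(T^nx_1,T^nx_2)>0$, and intersects the $\Delta$-set in $N_T(x_1,W)$ (Proposition \ref{1407160718}) with the $\Delta^*$-set $N_T(V,V)$ to produce the block $\{n,\dots,n+m\}\subset S_T(U,\delta)$. Your ``uniform-continuity upgrade'' via $B_{\epsilon_0}(y)$ is exactly the paper's choice of a neighborhood $W\ni x_2$ with $\max_{0\le i\le m}\diam(T^iW)<\delta$, and your shifting of the pair into $V$ is just the paper's reduction (via minimality) to neighborhoods of $x_1$.
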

\begin{proof} Since $\pi_{\text{eq}}: (X, T)\rightarrow (X_{\text{eq}}, T_{\text{eq}})$
is not proximal, there exists a pair of points $x_1, x_2\in X$, which is not proximal, such that $\pi_{\text{eq}} (x_1)= \pi_{\text{eq}} (x_2)$ (and hence $(x_1, x_2)\in Q_+ (X, T)$,
as $(X, T)$ is minimal). Then $d:= \inf\limits_{n\in \mathbb{N}} \varrho (T^n x_1, T^n x_2)> 0$.

Take $0< \delta< \dfrac{d}{3}$.
We are going to prove that  $(X, T)$ is thickly sensitive with a sensitivity constant $\delta> 0$. Since $(X, T)$ is minimal, it suffices to show
 that $S_T (U, \delta)$ is thick  for any open $U\subset X$ containing $x_1$.

 For any $m\in \mathbb{N}$ take open sets $V\subset U$ and $W$ containing $x_1$ and $x_2$, respectively, such that
$\max\limits_{0\le i\le m} \max \{\diam (T^i V), \diam (T^i W)\}< \delta$. By the above construction
 $\min\limits_{0\le i\le m} \dist (T^i V, T^i W)> \delta$. Since $(x_1, x_2)\in Q_+ (X, T)$, $N_T (x_1, W)$ contains a
$\Delta$-set by Proposition \ref{1407160718}, and hence has a nonempty intersection with $\mathcal{N}$, where
 $\mathcal{N}= N_T (V, V)\subset N_T (T V, T V)\subset \dots \subset N_T (T^m V, T^m V)$ is a $\Delta^*$-set by \cite[Page 177]{Furstenberg1981}.
 Therefore for every $n\in N_T (x_1, W)\cap \mathcal{N}$ and  $i= 0, 1, \dots, m$ we have: $T^{n+ i} V\cap T^i W\ni T^{n+ i} x_1$, because $T^n x_1\in W$;
 and $T^{n+ i} V\cap T^i V\neq \varnothing $, because  $n\in \mathcal{N}$. That gives  $\diam
  (T^{n+ i} V)\ge \dist (T^i W, T^i V)> \delta$. Thus
$$S_T (U, \delta)\supset S_T (V, \delta)\supset \{n+ i: n\in N_T (x_1, W)\cap \mathcal{N}, i= 0, 1, \dots, m\},$$
which implies that $(X, T)$ is thickly sensitive.
\end{proof}

Now we are ready to prove Theorem \ref{1407120539}.

\begin{proof}[Proof of Theorem \ref{1407120539}]
If $\pi_{\text{eq}}$ is almost one-to-one, then $(X, T)$ is not thickly sensitive by Proposition \ref{201603271142}.
Now assume that $(X, T)$ is not thickly sensitive, then $\pi_{\text{eq}}$ is proximal by Proposition
 \ref{1407150651}, and then $\pi_{\text{eq}}$ is almost one-to-one by Proposition \ref{1407150743} (as $(X_{\text{eq}}, T_{\text{eq}})$
 is an invertible minimal system). This finishes the proof.
\end{proof}

As a corollary of Theorem \ref{1407120539}, we have the following

\begin{prop} \label{201603271948}
Let $\pi: (X, T)\rightarrow (Y, S)$ be an almost one-to-one factor map between minimal systems. Then $(X, T)$ is syndetically equicontinuous if and only if so is $(Y, S)$.
\end{prop}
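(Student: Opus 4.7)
My plan is to reduce the proposition, via Theorem~\ref{1407181746}, to the statement that thick sensitivity is preserved in \emph{both} directions under an almost one-to-one factor map between minimal systems. Indeed, that theorem tells us that a minimal system is syndetically equicontinuous exactly when it is not thickly sensitive, so the proposition is equivalent to the equivalence of thick sensitivity of $(X,T)$ and of $(Y,S)$ under the given hypotheses.

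The easier direction is ``$(X,T) \in \text{Eq}_{\text{syn}} \Rightarrow (Y,S) \in \text{Eq}_{\text{syn}}$''. Since $\pi$ is a factor map between minimal systems, it is almost open by \cite[Theorem 1.15]{Auslander1988}, and the paper has already observed that thick sensitivity lifts from a factor to an extension through an almost open factor map (by the method of \cite[Lemma 1.6]{GlasnerWeiss1993}). So if $(Y,S)$ were thickly sensitive, so would be $(X,T)$, contradicting the assumption; hence $(Y,S)$ is not thickly sensitive and is therefore syndetically equicontinuous.

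The substantive direction is the converse. Assume $(Y,S) \in \text{Eq}_{\text{syn}}$. By Theorem~\ref{1407120539} applied to $(Y,S)$, there is an almost one-to-one factor map $\pi_Y : (Y,S) \to (Y_{\text{eq}}, S_{\text{eq}})$. My strategy is to show that the composition $\pi_Y \circ \pi : (X,T) \to (Y_{\text{eq}}, S_{\text{eq}})$ is itself almost one-to-one; once this is established, Proposition~\ref{201603271142} applies directly (since $(Y_{\text{eq}}, S_{\text{eq}})$ is minimal equicontinuous) and yields $(X,T) \in \text{Eq}_{\text{syn}}$.

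To verify the composition is almost one-to-one, I set $Y_0 = \{y \in Y : \pi^{-1}(y) \text{ is a singleton}\}$ and $Z_0 = \{z \in Y_{\text{eq}} : \pi_Y^{-1}(z) \text{ is a singleton}\}$; both are dense $G_\delta$ in their respective spaces, by the almost one-to-one hypothesis and the general $G_\delta$ observation recorded just before Remark~\ref{supplement}. Lemma~\ref{201603271750} applied to $\pi_Y$ yields that $\pi_Y^{-1}(Z_0) \subset Y$ is dense; being the continuous preimage of a $G_\delta$, it is also $G_\delta$. By Baire in $Y$, the intersection $Y_0 \cap \pi_Y^{-1}(Z_0)$ is then dense in $Y$, and its image under the continuous surjection $\pi_Y$ is dense in $Y_{\text{eq}}$. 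For any $z$ in that image, writing $z = \pi_Y(y)$ with $y \in Y_0 \cap \pi_Y^{-1}(Z_0)$, one has $\pi_Y^{-1}(z) = \{y\}$ because $z \in Z_0$ and $\pi^{-1}(y)$ is a singleton because $y \in Y_0$, so $(\pi_Y \circ \pi)^{-1}(z) = \pi^{-1}(y)$ is a singleton as required. The one bit of care needed is to arrange both $G_\delta$ sets in the common space $Y$ before invoking Baire; that is the main (and only) nonroutine point of the argument.
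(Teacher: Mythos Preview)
Your proposal is correct and follows essentially the same approach as the paper's own proof: both reduce via Theorem~\ref{1407181746} to the equivalence of thick sensitivity, handle the lifting direction via almost openness of $\pi$, and for the converse use Theorem~\ref{1407120539} to obtain the almost one-to-one map $\pi_Y$ to $(Y_{\text{eq}},S_{\text{eq}})$, then show the composite $\pi_Y\circ\pi$ is almost one-to-one by intersecting the two dense $G_\delta$ sets in $Y$ (via Lemma~\ref{201603271750} and Baire) and pushing forward to $Y_{\text{eq}}$, finishing with Proposition~\ref{201603271142}. Even the ``one bit of care'' you flag---arranging both $G_\delta$ sets in $Y$ before invoking Baire---is exactly the step the paper carries out.
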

\begin{proof}
By Theorem \ref{1407181746}, it suffices to prove that $(X, T)$ is not thickly sensitive if and only if so is $(Y, S)$.
As a factor map between minimal systems, $\pi$ is almost open by \cite[Theorem 1.15]{Auslander1988}, and so if $(X, T)$ is not thickly sensitive then so is $(Y, S)$, as the thick sensitivity can be lifted from a factor to an extension by an almost open factor map by the method used in the
   proof of \cite[Lemma 1.6]{GlasnerWeiss1993}.

   Now assume that $(Y, S)$ is not thickly sensitive, and then the factor map $\pi^*_{\text{eq}}: (Y, S)\rightarrow (Y_{\text{eq}}, S_{\text{eq}})$, the factor map of $(Y, S)$ over its maximal equicontinuous factor
  $(Y_{\text{eq}}, S_{\text{eq}})$, is almost one-to-one
by Theorem \ref{1407120539}. Set $\pi^*$ to be the composition factor map $\pi^*_{\text{eq}}\circ \pi: (X, T)\rightarrow (Y_{\text{eq}}, S_{\text{eq}})$. Denote by $Y_1$ ($Y_2$, $Y_0$, respectively) the set of all points $y_1\in Y_{\text{eq}}$ ($y_2\in Y_{\text{eq}}$, $y_0\in Y$, respectively) whose fibers $(\pi^*)^{- 1} (y_1)$ ($(\pi^*_{\text{eq}})^{- 1} (y_2)$, $\pi^{- 1} (y_0)$, respectively) are singletons.
 Then $Y_2$ is a dense $G_\delta$ subset of $Y_{\text{eq}}$, and $Y_0$ is a dense $G_\delta$ subset of $Y$. This implies that $Y_0\cap (\pi^*_{\text{eq}})^{- 1} (Y_2)$ is a dense $G_\delta$ subset of $Y$ by Lemma \ref{201603271750}, and then $\pi^*_{\text{eq}} (Y_0\cap (\pi^*_{\text{eq}})^{- 1} (Y_2))$ is a dense subset of $Y_{\text{eq}}$. Note that $\pi^*_{\text{eq}} (Y_0\cap (\pi^*_{\text{eq}})^{- 1} (Y_2))\subset Y_1$. In fact, for any $y_*\in \pi^*_{\text{eq}} (Y_0\cap (\pi^*_{\text{eq}})^{- 1} (Y_2))\subset Y_2$, we take $y_0\in Y_0$ with $\pi^*_{\text{eq}} (y_0)= y_*$, then $(\pi^*_{\text{eq}})^{- 1} (y_*)= \{y_0\}$ as $y_*\in Y_2$, and hence
 $$(\pi^*)^{- 1} (y_*)= \pi^{- 1}\circ (\pi^*_{\text{eq}})^{- 1} (y_*)= \pi^{- 1} (y_0)$$
 is a singleton as $y_0\in Y_0$. Thus we have the denseness of $Y_1$ in $Y_{\text{eq}}$, and then $(X, T)$ is not thickly sensitive by Proposition \ref{201603271142}. This finishes the proof.
\end{proof}

\bibliographystyle{amsplain}


\begin{thebibliography}{10}

\bibitem{Akin1997}
Ethan Akin, \emph{Recurrence in topological dynamics}, The University Series in
  Mathematics, Plenum Press, New York, 1997, Furstenberg families and Ellis
  actions. \MR{1467479 (2000c:37014)}

\bibitem{AAB1993}
Ethan Akin, Joseph Auslander, and Kenneth Berg, \emph{When is a transitive map
  chaotic?}, Convergence in ergodic theory and probability ({C}olumbus, {OH},
  1993), Ohio State Univ. Math. Res. Inst. Publ., vol.~5, de Gruyter, Berlin,
  1996, pp.~25--40. \MR{1412595 (97i:58106)}

\bibitem{AkinGlasner2001}
Ethan Akin and Eli Glasner, \emph{Residual properties and almost
  equicontinuity}, J. Anal. Math. \textbf{84} (2001), 243--286. \MR{1849204
  (2002f:37020)}

\bibitem{AK}
Ethan Akin and Sergi{\u\i} Kolyada, \emph{Li-{Y}orke sensitivity}, Nonlinearity
  \textbf{16} (2003), no.~4, 1421--1433. \MR{1986303 (2004c:37016)}

\bibitem{Auslander1988}
Joseph Auslander, \emph{Minimal flows and their extensions}, North-Holland
  Mathematics Studies, vol. 153, North-Holland Publishing Co., Amsterdam, 1988,
  Notas de Matem{\'a}tica [Mathematical Notes], 122. \MR{956049 (89m:54050)}

\bibitem{AuslanderYorke}
Joseph Auslander and James~A. Yorke, \emph{Interval maps, factors of maps, and
  chaos}, T\^ohoku Math. J. (2) \textbf{32} (1980), no.~2, 177--188. \MR{580273
  (82b:58049)}

\bibitem{BHM2000}
Fran{\c{c}}ois Blanchard, Bernard Host, and Alejandro Maass, \emph{Topological
  complexity}, Ergodic Theory Dynam. Systems \textbf{20} (2000), no.~3,
  641--662. \MR{1764920 (2002b:37019)}

\bibitem{Bochner}
Salomon Bochner, \emph{Curvature and {B}etti numbers in real and complex vector
  bundles}, Univ. e Politec. Torino. Rend. Sem. Mat. \textbf{15} (1955--56),
  225--253. \MR{0084160 (18,819c)}

\bibitem{Bo}
\bysame, \emph{A new approach to almost periodicity}, Proc. Nat. Acad. Sci.
  U.S.A. \textbf{48} (1962), 2039--2043. \MR{0145283 (26 \#2816)}

\bibitem{Downarowicz2005}
Tomasz Downarowicz, \emph{Survey of odometers and {T}oeplitz flows}, Algebraic
  and topological dynamics, Contemp. Math., vol. 385, Amer. Math. Soc.,
  Providence, RI, 2005, pp.~7--37. \MR{2180227 (2006f:37009)}

\bibitem{EG1960}
Robert Ellis and Walter~H. Gottschalk, \emph{Homomorphisms of transformation
  groups}, Trans. Amer. Math. Soc. \textbf{94} (1960), 258--271. \MR{0123635
  (23 \#A960)}

\bibitem{Furstenberg1981}
Harry Furstenberg, \emph{Recurrence in ergodic theory and combinatorial number
  theory}, Princeton University Press, Princeton, N.J., 1981, M. B. Porter
  Lectures. \MR{603625 (82j:28010)}

\bibitem{Glasner1997}
Eli Glasner, \emph{A simple characterization of the set of {$\mu$}-entropy
  pairs and applications}, Israel J. Math. \textbf{102} (1997), 13--27.
  \MR{1489099 (98k:54076)}

\bibitem{GlasnerWeiss1993}
Eli Glasner and Benjamin Weiss, \emph{Sensitive dependence on initial
  conditions}, Nonlinearity \textbf{6} (1993), no.~6, 1067--1075. \MR{1251259
  (94j:58109)}

\bibitem{Gu}
John Guckenheimer, \emph{Sensitive dependence to initial conditions for
  one-dimensional maps}, Comm. Math. Phys. \textbf{70} (1979), no.~2, 133--160.
  \MR{553966 (82c:58037)}

  \bibitem{HKKZ2016}
Wen Huang, Danylo Khilko, Sergi{\u\i} Kolyada, and Guohua Zhang,
  \emph{Dynamical compactness and sensitivity}, J. Differential Equations
  \textbf{260} (2016), no.~9, 6800--6827. \MR{3461085}

\bibitem{HuangYe2000}
Wen Huang and Xiangdong Ye, \emph{Devaney's chaos or 2-scattering implies
  {L}i-{Y}orke's chaos}, Topology Appl. \textbf{117} (2002), no.~3, 259--272.
  \MR{1874089 (2003b:37017)}

\bibitem{KS1997}
Sergi{\u\i} Kolyada and L'ubom{\'{\i}}r Snoha, \emph{Some aspects of
  topological transitivity---a survey}, Iteration theory ({ECIT} 94) ({O}pava),
  Grazer Math. Ber., vol. 334, Karl-Franzens-Univ. Graz, Graz, 1997, pp.~3--35.
  \MR{1644768}

\bibitem{KST2001}
Sergi{\u\i} Kolyada, L'ubom{\'{\i}}r Snoha, and Serge{\u\i} Trofimchuk,
  \emph{Noninvertible minimal maps}, Fund. Math. \textbf{168} (2001), no.~2,
  141--163. \MR{1852739 (2002j:37017)}

\bibitem{LiYe}
Jian Li and Xiangdong Ye, \emph{Recent development of chaos theory in
  topological dynamics}, Acta Math. Sin. (Engl. Ser.) \textbf{32} (2016),
  no.~1, 83--114. \MR{3431162}

\bibitem{liuheng}
Heng Liu, Li~Liao, and Lidong Wang, \emph{Thickly {S}yndetical {S}ensitivity of
  {T}opological {D}ynamical {S}ystem}, Discrete Dyn. Nat. Soc. (2014), Art. ID
  583431, 4. \MR{3200824}

\bibitem{Subrahmonian2007}
T.~K.~Subrahmonian Moothathu, \emph{Stronger forms of sensitivity for dynamical
  systems}, Nonlinearity \textbf{20} (2007), no.~9, 2115--2126. \MR{2351026
  (2008j:37014)}

\bibitem{Ru}
David Ruelle, \emph{Dynamical systems with turbulent behavior}, Mathematical
  problems in theoretical physics ({P}roc. {I}nternat. {C}onf., {U}niv. {R}ome,
  {R}ome, 1977), Lecture Notes in Phys., vol.~80, Springer, Berlin-New York,
  1978, pp.~341--360. \MR{518445 (81e:58032)}

\bibitem{Veech1965}
William~A. Veech, \emph{Almost automorphic functions on groups}, Amer. J. Math.
  \textbf{87} (1965), 719--751. \MR{0187014 (32 \#4469)}

\bibitem{Veech1968}
\bysame, \emph{The equicontinuous structure relation for minimal {A}belian
  transformation groups}, Amer. J. Math. \textbf{90} (1968), 723--732.
  \MR{0232377 (38 \#702)}

\bibitem{Veech1977}
\bysame, \emph{Topological dynamics}, Bull. Amer. Math. Soc. \textbf{83}
  (1977), no.~5, 775--830. \MR{0467705 (57 \#7558)}

\bibitem{Walter}
Peter Walters, \emph{An introduction to ergodic theory}, Graduate Texts in
  Mathematics, vol.~79, Springer-Verlag, New York-Berlin, 1982. \MR{648108
  (84e:28017)}

\bibitem{YeYu}
Xiangdong Ye and Tao Yu, \emph{Sensitivity, proximal extension and higher order
  almost automorphy}, preprint, 2015.


\end{thebibliography}

\end{document}